\begin{document}
\renewcommand{\theenumi}{(\roman{enumi})}
\newenvironment{proof}{\trivlist \item[\hskip \labelsep\hspace{5.5mm}{\bf Proof.}]}{
\endtrivlist}
\newtheorem{corollary}{\hspace{-2mm}Corollary}[section]
\renewcommand{\thecorollary}{\arabic{section}.\arabic{corollary}.}
\newtheorem{proposition}{\hspace{-2mm}Proposition}[section]
\renewcommand{\theproposition}{\arabic{section}.\arabic{proposition}.}
\newtheorem{theorem}{\hspace{-2mm}Theorem}[section]
\renewcommand{\thetheorem}{\arabic{section}.\arabic{theorem}.}
\newtheorem{lemma}{\hspace{-2mm}Lemma}[section]
\renewcommand{\thelemma}{\arabic{section}.\arabic{lemma}.}
\newtheorem{remark}{\hspace{-2mm}Remark}[section]
\renewcommand{\theremark}{\arabic{section}.\arabic{remark}.}
\newtheorem{defi}{\hspace{-2mm}Definition}[section]
\renewcommand{\thedefi}{\arabic{section}.\arabic{defi}.}
\renewcommand{\thesection}{\arabic{section}.}
\renewcommand{\thesubsection}{\arabic{section}.\arabic{subsection}.}
\newcounter{example}[section]
\newenvironment{example}{\refstepcounter{example} \trivlist
\item[\hskip \labelsep{\hspace{6mm}{\bf Example \theexample.}}]}{
\endtrivlist}
\newenvironment{claim}{\trivlist \item[\hskip \labelsep{\bf
Claim.} \it]}{ \endtrivlist}

\renewcommand{\labelenumi}{\normalfont (\roman{enumi})}
\makeatletter
\def\@begintheorem#1#2{\trivlist
  \item[\hskip 1cm {\bfseries #1\ #2}]\itshape}
\makeatother

\makeatletter
\def\enumerate{%
 \ifnum \@enumdepth >\thr@@\@toodeep\else
   \advance\@enumdepth\@ne
   \edef\@enumctr{enum\romannumeral\the\@enumdepth}%
     \expandafter
     \list
       \csname label\@enumctr\endcsname
       {\usecounter\@enumctr%
       \leftmargin 0pt%
       \itemindent 4mm 
       \def\makelabel##1{\hskip 11mm\llap{##1}}
       }%
 \fi}
\makeatother

\title{Orders of Finite Reductive Monoids
\footnote{Project 10471116 supported by NSFC} }
\date{}
\maketitle
\vspace{ -1.5cm}

\centerline {Zhuo Li  ~~Zhenheng Li$^1$ ~~You'an Cao}

\footnotetext[1]{Partially supported from 2006 summer stipend of
University of South Carolina Aiken, USA }


\vspace{ 0.2cm}

\def\J {{\cal J}}

\begin{abstract}
We show four formulas for calculating the orders of finite reductive monoids with zero. As applications, these formulas are then used to calculate the orders of finite reductive monoids induced from the $F_q$-split  $\J$-irreducible monoids $\overline {K^*\rho(G_0)}$ where $G_0$ is a simple algebraic group over the algebraic closure of $F_q$, and $\rho: G_0\to GL(V)$ is the irreducible representation associated with any dominant weight. Finally, we give an explicit formula for the orders of finite symplectic monoids associated with the last fundamental dominant weight of type $C_l$; the connections to $H$-polynomials and Betti numbers are shown.

\vspace{ 0.3cm}
\noindent {\bf Keywords:} Finite Reductive Monoid, Type
Map, $H$-Polynomial, Symplectic Monoid.

\vspace{ 0.3cm} \noindent {\bf AMS Classifications:} 20G40; 20G15;
20M99

\end{abstract}

\def\a {\alpha}
\def\b {\beta}
\def\e {\varepsilon}
\def\s {\sigma}
\def\v {\vec}
\def\js {(\cal J, \sigma)}

\baselineskip 17pt


\section{Introduction}
Renner \cite{R3} extended the endomorphisms induced by the Frobenius maps and the graph automorphisms of the
Coxeter-Dynkin diagrams to reductive monoids. The monoids $M$ consisting of fixed points of the extended
endomorphisms are called {\it finite reductive monoids}. We are interested in formulas for calculating the
orders of these finite monoids.

Renner \cite{R4} established an enumerative theory of finite reductive monoids using a length function on Renner monoids.
Recently, in \cite{C2, R6, R7, R8} Can and Renner gave a systematic description of $H$-polynomials of reductive monoids,
whose whole point is to investigate the orders of finite reductive monoids.

Yan \cite{YAN} obtained a general formula for the orders of monoids of Lie type based on how the monoids are constructed.
He then used computers to calculate the order of $M_n(F_q)$ as an example. However, his formula can't be used to compute
the orders of finite reductive monoids in general because, with our notation, a key factor
$$
K(e) = \{g\in G \mid ge = e = eg\}
$$
in the formula was unknown except for $M_n(F_q)$, where $e$ is any element of the cross section lattice of $M$.
It turns out that $K(e)$ is critical in our work. A precise characterization of $K(e)$ is given in {Theorem 2.2}
in terms of the type map (see Definition 2.2) of $M$ and certain root subgroups of the unit group of $M$.

We show that, in Theorem 2.1, if ${\bold M}$ and ${\bold N}$ are reductive monoids and $F_q$-split in the sense
of \cite{H2} (see Section 34 of \cite{H2}), and there is a finite dominant morphism, defined over $F_q$, from
${\bold M}$ to ${\bold N}$, then $|M| = |N|$, where $M$ and $N$ are the finite reductive monoids consisting of
$F_q$-rational points of ${\bold M}$ and ${\bold N}$, respectively.

We then obtain four formulas in Section 3 for calculating the orders of the finite reductive monoids which are
fixed points of reductive monoids with zero under the standard Frobenius map. The first formula in {Theorem 3.1}
is based on the action of $G\times G$ on $M$ defined by
$
    (g, h)m=gmh^{-1},
$
where $m\in M$ and $(g,h)\in G\times G$. The structure of the isotropy group $(G\times G)_e$ and its cardinality
are described in {Proposition 3.1}, where $e\in\Lambda$. The second formula given in Theorem 3.2 is a refinement
of the first by using the structure of $K(e)$. A more theoretically useful formula, the third, is shown in Theorem 3.3
in terms of polynomials $\sum_{w\in D(e)}q^{l(w)}$ and $\sum_{w\in
D_{*}(e)}q^{l(w)}$, where $D(e)$ and $D_*(e)$ are appropriate
subsets of the Weyl group. The fourth formula, which is more practically useful,
is characterized in {Theorem 3.4} using the degrees of certain
basic polynomial invariants of the parabolic subgroups of $W$.

The above formulas compute the orders of finite reductive monoids with zero when their type maps
are known. So far, we know explicitly the type maps of the $\J$-irreducible monoids (\cite{PR2}),
the 2-reducible monoids (\cite{R1}), and the multilined closure monoids (\cite{LP1, LR1}).
In {Theorem 4.1} we determine the orders of the finite reductive monoids induced from the
$F_q$-split $\J$-irreducible monoids $\overline {K^*\rho(G_0)}$ where $G_0$ is a simple algebraic group
over the algebraic closure of $F_q$, and $\rho: G_0\to GL(V)$ is the irreducible representation associated
with any dominant weight.

Finally, we explicitly compute in {Theorem 4.2} the orders of finite symplectic monoids induced from
$\overline {K^*\rho(G_0)}$ associated with the last fundamental dominant weight of type $C_l$, and then
show connections with $H$-polynomials in Proposition 4.2 and Betti numbers in Corollary 4.1.


\section{Finite Reductive Monoids}

An {\it algebraic monoid} is an affine variety defined over an
algebraically closed field $K$ together with an associative
morphism and an identity. The unit group of an algebraic monoid is
an algebraic group. An algebraic monoid is {\it irreducible} if it
is irreducible as a variety. An irreducible monoid is called {\it
reductive} if its unit group is a reductive group. A reductive
monoid with zero is $\J$-{\it irreducible} if its cross section
lattice has a unique minimal non-zero idempotent. A systematic description of the theory of reductive
monoids can be found in Putcha \cite{PU1}, Renner \cite{R1}, and  Solomon \cite{LS1}.

We now explain the construction of finite reductive monoids from reductive monoids as found in \cite{R3}.
From now on, let $F_q$ be a finite
field with $q$ elements and $K$ the algebraic closure of $F_q$.
Suppose that ${\bold M}$, with 0 and unit group ${\bold G}$, is a reductive monoid over $K$.
Let $\sigma: {\bold M} \to {\bold M}$
be an endomorphism such that $\sigma$ is a finite morphism and
$G = \{x\in {\bold G} \mid \sigma(x) = x\}$ is a finite group.

\begin{defi} Let ${\bold M}$ be reductive with $\sigma$ as above.
By a finite reductive monoid $M$, we mean
\[
M = \{x\in {\bold M} \mid \sigma(x) = x\}.
\]
\end{defi}

If ${\bold M} = M_n(K)$ consisting of all $n\times n$
matrices over $K$ and $\sigma$ be the Frobenius map from $M_n(K)$ to
$M_n(K)$ defined by: $[a_{ij}]\mapsto [a_{ij}^q]$, then $M =
M_n(F_q)$.

We next consider relationships among the orders of finite reductive monoids induced from reductive monoids
${\bold M}$ defined over $F_q$. We say that ${\bold M}$ is $F_q$-split
if its unit group ${\bold G}$ is $F_q$-split in the sense of \cite{H2}. Let $M$ be the monoid of $F_q$-rational
points of ${\bold M}$ (c.f. Section 34 of \cite{H2}). It follows
from Section 4 of \cite{R5} that there exists an $F_q$-automorphism
$\sigma$: ${\bold M} \to {\bold M}$ of algebraic monoids such that
$M = \{x\in {\bold M} \mid \sigma(x) = x\}$.
Thus, $M$ is a finite reductive monoid. More properties
about the finite reductive monoids can be found in \cite{PR3, R3, R4}.
Renner pointed out the following theorem to the authors.

\begin{theorem} Let ${\bold M}$ and ${\bold N}$ be $F_q$-split reductive monoids, and let $M$ and $N$ be two finite
monoids of $F_q$-rational points of ${\bold M}$ and ${\bold N}$, respectively. If $h : {\bold M} \rightarrow {\bold N}$
is a finite dominant morphism defined over $F_q$, then $|M| = |N|$.
\end{theorem}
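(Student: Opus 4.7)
The plan is to reduce $|M|=|N|$ to the classical fact that isogenous $F_q$-split connected reductive groups share the same number of $F_q$-rational points, and then to bootstrap this from the unit groups to the full monoids via the $({\bold G}\times{\bold G})$-orbit decomposition of $\bold M$.

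First, because $h$ is finite and dominant between irreducible varieties, $h$ is surjective; since $h$ is in addition a morphism of algebraic monoids, it sends units to units, and the restriction $h_0 = h|_{\bold G}\colon {\bold G}\to {\bold H}$ remains finite and surjective, hence a central isogeny of $F_q$-split connected reductive groups defined over $F_q$. Using the well-known order formula $|{\bold G}(F_q)| = q^{|\Phi^+|}\prod_i(q^{d_i}-1)$, whose right-hand side depends only on the common Weyl group and its fundamental degrees, I conclude that $|G|=|H|$.

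Second, I would exploit the orbit decomposition $\bold M = \bigsqcup_{e\in\Lambda_{\bold M}}{\bold G}e{\bold G}$, each piece being defined over $F_q$ by the split hypothesis. The morphism $h$ induces an order-preserving bijection $\Lambda_{\bold M}\to\Lambda_{\bold N}$ compatible with the type map, because $h_0$ identifies the parabolic and Levi subgroups $P_e^\pm, L_e$ attached to $e$ with the corresponding data on the $\bold N$-side. Writing $|{\bold G}e{\bold G}\cap M|=|G|^2/|({\bold G}\times {\bold G})_e(F_q)|$ and noting that the stabilizer is assembled from those parabolics and the Levi of $e$---all of whose $F_q$-orders are preserved by the central isogeny $h_0$---one sees that corresponding orbits contribute equal counts to $|M|$ and $|N|$. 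Summation then yields $|M|=|N|$.

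The main obstacle is ensuring that the possibly nontrivial kernel of $h_0$ does not affect the $F_q$-point counts of the stabilizer, parabolic, and Levi subgroups appearing in the formula. This is handled by the same structural reason that gives $|G|=|H|$: the orders of split reductive groups over $F_q$ are determined entirely by the Weyl group together with the degrees of the fundamental invariants---combinatorial data shared across a central isogeny class---and this observation propagates to each Levi and parabolic piece used in the orbit count.
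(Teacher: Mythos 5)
Your argument reduces everything to the two claims that $|{\bold G}e{\bold G}\cap M| = |G|^2/\,|(({\bold G}\times{\bold G})_e)(F_q)|$ and that $h$ preserves the $F_q$-order of these stabilizers, and neither claim is established; this is where the proposal has a genuine gap. Counting rational points of an orbit by dividing by the rational points of the stabilizer is an application of Lang's theorem and is only valid when the stabilizer is \emph{connected}; otherwise the $F$-fixed points of ${\bold G}e{\bold G}$ break into several $(G\times G)$-orbits indexed by Frobenius-classes of the component group, and the naive quotient is wrong (compare the variety of maximal tori $SL_2/N({\bold T})$, which has $q^2$ rational points, not $|SL_2(F_q)|/|N({\bold T})(F_q)| = q(q+1)/2$). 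Here the stabilizer of $e$ is built not only from the parabolics ${\bold P}(e)^{\pm}$ and their common Levi, but from ${\bold K}(e)=\{g\in{\bold G}\mid ge=eg=e\}$, and nothing in your argument (nor in the paper's Theorem 2.2, which concerns the finite monoid, not the algebraic stabilizer) shows this group is connected. The same issue undermines the second claim: $({\bold G}\times{\bold G})_e$ is not reductive, so its point count is not governed by ``Weyl group plus degrees of invariants''; it involves ${\bold T}(e)$ and ${\bold K}(e)$, and invariance of point counts under a finite surjection genuinely needs connectedness (a finite surjection of non-connected groups, e.g. $\mu_n\to 1$, changes the number of rational points). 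A smaller inaccuracy: $h|_{\bold G}$ need not be a \emph{central} isogeny (the Frobenius endomorphism of $M_n(K)$ is a finite dominant $F_q$-morphism of monoids whose kernel on ${\bold G}$ is infinitesimal and non-central); the conclusion $|G|=|H|$ still holds for connected groups, but the ``central isogeny class'' language cannot carry the weight you later place on it for the stabilizer pieces.

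The paper avoids all of this by working with the Bruhat--Renner decomposition instead of the $G\times G$-orbit decomposition: ${\bold M}=\bigsqcup_{r\in{\bold R}}{\bold B}r{\bold B}$, the morphism $h$ induces an isomorphism of Renner monoids, each cell ${\bold B}r{\bold B}$ with $r\ne 0$ is as a variety $K^a\times(K^*)^b$, the corresponding cell $h({\bold B})h(r)h({\bold B})$ has the same exponents because $h$ is finite and dominant, and splitness gives $q^a(q-1)^b$ rational points cell by cell, so the two sums coincide. If you want to keep your route, you must either prove connectedness of $({\bold G}\times{\bold G})_e$ (equivalently control ${\bold K}(e)$ and ${\bold T}(e)$) and show that $h$ restricts to a finite surjection on these stabilizers compatible with Frobenius, or switch to a cell-type decomposition in which no stabilizer appears.
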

\begin{proof}
Let ${\bold B}$ be a Borel subgroup of the unit group of ${\bold M}$. Then $h({\bold B})$ is a Borel subgroup of
${\bold N}$.  Note that $h$ induces an isomorphism on the Renner monoids of ${\bold M}$ and ${\bold N}$.  Denote by
${\bold R}$ the Renner monoid
of ${\bold M}$. If ${\bold M}$ is the disjoint union of ${\bold B}r{\bold B}$ for $r\in {\bold R}$, then ${\bold N}$
is the disjoint union of $h({\bold B})h(r)h({\bold B})$ for $r\in {\bold R}$. Since ${\bold B}$ and $h({\bold B})$
are connected solvable groups, for $r\ne 0$, ${\bold B}r{\bold B}$ is isomorphic, as a variety, to $K^a \times (K^*)^b$
for integers
$a\ge 0$ and $b>0$.
But so is $h({\bold B})h(r)h({\bold B})$ because $h$ is finite and dominant. Also remember that everything is
$F_q$-split
here. So $|M|$ is a sum of the $F_q^a\times (F_q^*)^b = q^a(q-1)^b$. The same sum works for $|N|$. $\hfill\Box$
\end{proof}

If  $\tilde{{\bold M}}$ is the normalization of ${\bold M}$ then $\tilde{{\bold M}}$ is a reductive monoid.
Let $\tilde{M}$ be the set of $F_q$-rational points of $\tilde{{\bold M}}$. The above theorem shows that
$|\tilde{M}| = |M|$.


\subsection{Structure of K(e) for Finite Reductive Monoids}
Let $G$ be the group of units of a finite reductive monoid $M$. Suppose that $B$ and
$B^-$ are opposite Borel subgroups and $T\subseteq B$ a maximal
torus. Denote by $W=N_G(T)/T$ the Weyl group. For $X\subseteq M$,
we define $E(X)=\{e\in X \mid e^2=e\}$ to be the set of
idempotents in $X$. For $e\in E(M)$, let $P(e) = \{g\in G \mid
ge=ege\}$ and $P^{-}(e) = \{g\in G \mid eg=ege\}$. Then
$\Lambda=\{e\in E(M) \mid B \subseteq P(e), B^-\subseteq P^-(e)\}$
is called a cross section lattice of $M$. We have a decomposition of $M$ into $G\times G$ orbits
\[
    M=\bigsqcup_{e\in \Lambda}GeG.
\]
The monoid $R = \langle \, W, ~\Lambda \, \rangle$ is referred to as the Renner monoid of $M$
(see \cite{PU2, PU3, PU4, R4}).
Then $M$ has a Bruhat-Renner decomposition
\[
    M=\bigsqcup_{x\in R} BxB.
\]
The following results are well known for finite reductive monoids with zero \cite{PR3, R3, R4}.

\vspace{-2mm}
\begin{proposition}{\label{BN}}
    \begin{enumerate}
    \item For $e\in E(M)$, the above $P(e)$ and $P^{-}(e)$ are
    opposite parabolic subgroups of $G$, and
    $
        U(e)e = \{e\} = eU^-(e),
    $
    where $U(e)$ and $U^-(e)$ are the unipotent radicals of $P(e)$ and $P^-(e)$, respectively.
    \vspace{-1mm}
    \item For any $e, f\in E(M)$, $eM=fM $ or $Me=Mf$ implies that $e = x^{-1}fx$ for some $x\in G$.
    \end{enumerate}
\end{proposition}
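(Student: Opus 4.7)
The plan is to lift the statements to the ambient algebraic monoid $\mathbf{M}$ and then descend to the $F_q$-rational points $M$. Because $\sigma$ can be chosen to fix our opposite Borel subgroups $\mathbf{B}, \mathbf{B}^-$ and the maximal torus $T$, it permutes idempotents in a way compatible with $\Lambda$ and preserves the root subgroups up to relabelling, so any parabolic pair and any root-subgroup identity obtained for $\mathbf{M}$ restricts to $M$ on taking $\sigma$-fixed points. I would therefore work at the algebraic level throughout.

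For part (i), the first step is to observe that $\mathbf{P}(e) = \{g \in \mathbf{G} \mid ge = ege\}$ is Zariski-closed in $\mathbf{G}$ by continuity of monoid multiplication. Since $\mathbf{P}(e)$ contains the Borel $\mathbf{B}$ by the very definition of $\Lambda$, the standard criterion forces $\mathbf{P}(e)$ to be a parabolic subgroup; the same reasoning applied to $\mathbf{B}^- \subseteq \mathbf{P}^-(e)$ handles $\mathbf{P}^-(e)$. To show the two parabolics are opposite, I would identify their intersection $\mathbf{L}(e) := \mathbf{P}(e) \cap \mathbf{P}^-(e)$ with the centralizer $C_{\mathbf{G}}(e)$ and appeal to the Putcha--Renner structure theorem \cite{PU1, R1}, which recognizes $\mathbf{L}(e)$ as a common Levi factor of $\mathbf{P}(e)$ and $\mathbf{P}^-(e)$. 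For the identity $\mathbf{U}(e)e = \{e\}$, I would use the root-subgroup decomposition $\mathbf{U}(e) = \prod_\alpha \mathbf{U}_\alpha$, where $\alpha$ ranges over the roots in the unipotent radical, and check root-by-root that $\mathbf{U}_\alpha e = \{e\}$. The $T$-weights on the closed subvariety $\mathbf{U}_\alpha e \subseteq \mathbf{M}e$ force a collapse to a single point, which must be $e$ itself. The right-hand identity $e\mathbf{U}^-(e) = \{e\}$ is symmetric.

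For part (ii), the assumption $eM = fM$ yields $e = fm$ and $f = en$ for some $m, n \in M$. Left-multiplying $e = fm$ by $f$ gives $fe = ffm = fm = e$, and similarly $ef = f$, so $e$ and $f$ are $\mathcal{R}$-equivalent in the sense of Green's relations. In a reductive monoid, $\mathcal{R}$-equivalent idempotents lie in a common $\mathcal{J}$-class, and any two idempotents in the same $\mathcal{J}$-class are conjugate by a unit: this is a refinement of Green's theorem for regular monoids, as developed in \cite{PU1, PR3}. The hypothesis $Me = Mf$ is handled by the left-right dual of the same argument.

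The main obstacle lies in the opposite-parabolic claim in part (i): showing that $\mathbf{L}(e)$ is in fact a common Levi factor is not a formal consequence of merely containing a Borel and the torus, and requires the monoid-theoretic input that $e$ is centralized by a Levi subgroup determined by its type in $\Lambda$. Once this Levi identification is in hand, the remaining steps, including the descent to $\sigma$-fixed points, are routine.
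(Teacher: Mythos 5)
The paper does not actually prove this proposition: it is quoted as well known, with the proof delegated to \cite{PR3, R3, R4}. So the real question is whether your outline would stand on its own, and there it has a genuine gap, concentrated in the rational-points descent. In part (ii) your Green's-relations computation correctly gives $fe=e$, $ef=f$, so $e$ and $f$ are $\mathcal{R}$-related, and in the algebraic monoid $\mathbf{M}$ idempotents in the same $\mathcal{J}$-class are indeed conjugate under the algebraic unit group $\mathbf{G}$. But the proposition asserts conjugacy by an element $x$ of the \emph{finite} group $G=\mathbf{G}^{\sigma}$, and that is not a formal consequence of regular-semigroup theory; it is exactly the content of the statement, so citing ``idempotents in a common $\mathcal{J}$-class are conjugate by a unit'' from \cite{PU1, PR3} at this point is circular. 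The missing ingredient is a Lang--Steinberg argument: the transporter $\{g\in\mathbf{G}\mid g^{-1}fg=e\}$ is $\sigma$-stable and is a coset of the conjugation stabilizer of $e$, which is $\{g\in\mathbf{G}\mid ge=eg\}=\mathbf{P}(e)\cap\mathbf{P}^{-}(e)$, a Levi subgroup and hence connected; Lang's theorem for this connected group is what produces a $\sigma$-fixed conjugator. Without identifying this stabilizer and its connectedness, the descent from $\mathbf{G}$-conjugacy to $G$-conjugacy simply is not addressed in your sketch.

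There are two smaller soft spots in part (i). First, the statement is for all $e\in E(M)$, not only $e\in\Lambda$, so ``$\mathbf{P}(e)\supseteq\mathbf{B}$ by the definition of $\Lambda$'' is not available; you must first put $e$ into the closure of a maximal torus (every idempotent of $\mathbf{M}$ is $\mathbf{G}$-conjugate to such an idempotent), so that $\mathbf{P}(e)$ contains \emph{some} Borel subgroup, and then check that the $\sigma$-stable parabolic $\mathbf{P}(e)$, its opposite, and its unipotent radical have $\sigma$-fixed-point groups that are a parabolic of $G$, its opposite, and its unipotent radical, with $P(e)=\mathbf{P}(e)^{\sigma}$; connectedness plus Lang's theorem is again what makes this routine, but it needs to be said. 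Second, the claim that the $T$-weights on $\mathbf{U}_{\alpha}e$ ``force a collapse to a single point'' is an assertion rather than an argument; the standard proof (and the one implicit in Lemma 7.4 of \cite{R1}, which the paper itself invokes later) realizes $e$ as a limit of a one-parameter subgroup $\lambda$ with $\mathbf{P}(e)=\mathbf{P}(\lambda)$ and computes $ue=\lim_{t\to 0}\lambda(t)u\lambda(t)^{-1}\cdot\lambda(t)=e$. With these repairs your plan is viable, but as written the conjugacy descent in (ii) is the substantive missing idea.
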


\vspace{-3mm}
Putcha \cite{PU3} calls an abstract monoid $M$ with unit group $G$
{\it a monoid of Lie type} if $M=E(M)G$ satisfies (i) and (ii) of
the above theorem and $G$ is a finite group of Lie type. It
follows from Section 4 of \cite{R3} that finite reductive monoids
are monoids of Lie type; see also \cite{PU2, PU3}.

\begin{defi} Let $M$ be a finite reductive monoid and $\Delta$ be the set of simple roots.
The type map $\lambda: \Lambda
\rightarrow 2^\Delta$ is given by
$$
\lambda(e) = \{\alpha \in \Delta \mid s_\alpha e = e s_\alpha \}.
$$
\end{defi}

Let
$
    \lambda^*(e) = \{ \alpha \in \Delta \mid s_\alpha e = e s_\alpha \ne e \}
    \text{ and } \lambda_*(e) =  \{ \alpha \in \Delta \mid s_\alpha e = e s_\alpha = e \}.
$
Then we have the following parabolic subgroups of the Weyl group $W$.
\[
\begin{aligned}
    W(e)  &= W_{\lambda(e)} = \{ w \in W \mid w e = e w \}.         \\
    W_*(e)&= W_{\lambda_*(e)} = \{ w \in W \mid w e = e w = e \}.   \\
    W^*(e)&= W_{\lambda^*(e)}.      \\
    W(e)  &= W_*(e) \times W^*(e).
\end{aligned}
\]
For $e\in\Lambda$, denote by $\Phi_{\lambda_*(e)}\subseteq \Phi$ the root subsystem with a
base $\lambda_*(e)\subseteq \Delta$. Let $U_{\alpha}$ be the root subgroup determined by
$\a \in \Phi_{\lambda_*(e)}$. Construct a subgroup of $G$ as
follows
$$
G_{\lambda_*(e)} =\langle \, T(e), U_{\a} \mid \a \in \Phi_{\lambda_*(e)}  \, \rangle
$$
where
$
    T(e) = \{t\in T \mid te = e = et \}.
$
Thus $G_{\lambda_*(e)}$ is determined by $T(e)$, the type map of
$M$ and certain root subgroups of $G$.

The subgroup of $M$ below plays an important role in our work
$$
    K(e) = \{g\in G \mid ge = e = eg\}.
$$
Putcha \cite{PU3} gave a precise description of ${ K}(e)$ for the universally maximal monoid in the context of
monoids of Lie type. Then Putcha and Renner gave another characterization of ${ K}(e)$ in \cite{PR4}.

We describe $K(e)$ in a different way by claiming that $K(e) = G_{\lambda_*(e)}$. If $e=1$, then
${K}(e) = {G}_{\lambda_*(e)} =
\{1\}$. A simple calculation yields that, if $e = 0$, then ${K}(e) = {G}_{\lambda_*(e)} = {G}$.
By Lemma 7.4 (a) of \cite{R1}, for any $\a\in \lambda_*(e)$, we have ${U}_\a e = e {U}_\a = e$,
for $e\in {\Lambda} \setminus
\{0, 1\}$. Thus ${G}_{\lambda_*(e)} \subseteq {K}(e)$,
for all $e\in {\Lambda}$. To prove $K(e) = G_{\lambda_*(e)}$ in general,
we need the concept of the canonical (or normal) form of an element in $G$, which can be found in
Section 28.4 of \cite{H2} or Section 8.4 of \cite {C1}. Let $G, B, T$, and $W$ be as above, and denote
by $U$ the unipotent radical of $B$.

\begin{lemma}\label{lemma 2.1}
For an arbitrary $g\in {G}$, let $g=utwv$ be its canonical form, where $u, v\in {U}$, $t\in {T}$ and $w\in {W}$.
Then $g\in {K}(e)$ if and only if $u,v, w, t\in {K}(e)$, for $e \in {\Lambda}$.
\end{lemma}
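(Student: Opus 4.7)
Plan. The ``if'' direction is immediate: $K(e)$ is a subgroup of $G$, since the left and right stabilizers of $e$ are each subgroups (e.g., from $ge=e$ one gets $g^{-1}e=g^{-1}(ge)=e$, and from $eg=e$ one gets $eg^{-1}=(eg)g^{-1}=e$), and $K(e)$ is their intersection. Hence any product $utwv$ of elements of $K(e)$ lies in $K(e)$.

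For the ``only if'' direction, the strategy is to identify the Weyl factor via the Bruhat--Renner decomposition of $M$ and then peel off the remaining factors. Since $U\subseteq B\subseteq P(e)=L(e)\cdot U(e)$, every $v\in U$ decomposes uniquely as $v=v_1v_2$ with $v_1\in U\cap L(e)$ and $v_2\in U(e)$. Proposition~2.1 gives $v_2e=e$, and because $L(e)=P(e)\cap P^-(e)$ coincides with the centralizer $C_G(e)$, the element $v_1$ commutes with $e$. Substituting into $ge=e$,
\[
  e \;=\; u t w(ve) \;=\; u t w(ev_1) \;=\; (ut)(we)\,v_1 \;\in\; B\cdot(we)\cdot B .
\]
Since $e\in BeB$ and the cells in $M=\bigsqcup_{x\in R}BxB$ are disjoint, we conclude $we=e$ in $R$. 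The mirror relation $ew=e$ follows by running the same argument on $eg=e$---most cleanly by applying the ``right--stabilizer'' computation to $g^{-1}\in K(e)$, whose canonical form features $w^{-1}$, and using the opposite identity $eU^-(e)=\{e\}$ of Proposition~2.1. Combined with the standard fact that for $e\in\Lambda$ one has $we=e\Leftrightarrow ew=e\Leftrightarrow w\in W_*(e)$, this puts $w\in K(e)$.

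With $w\in K(e)$ the identity $e=utwve$ collapses to $e=utev_1$. Decomposing $u=u_1u_2$ with $u_1\in U\cap L(e)$ and $u_2\in U(e)$, and using that $T$ normalizes $U(e)$ to rewrite $u_2te=t(t^{-1}u_2t)e=te$, one obtains $(u_1tv_1)e=e$ with $u_1tv_1\in L(e)$. The symmetric manipulation of $eg=e$ yields $e(u_1tv_1)=e$, so $u_1tv_1\in K(e)\cap L(e)$. Working inside the Levi $L(e)$, where $e$ is the identity of the local monoid $eL(e)e$ and $L(e)$ inherits its own Bruhat decomposition, the uniqueness of the factorization $u_1\cdot t\cdot v_1$ (with $u_1,v_1$ in the positive unipotent of $L(e)$ and $t\in T$) forces $t\in T(e)$ and $u_1=v_1=1$; hence $u=u_2,\,v=v_2\in U(e)\cap K(e)$, and by uniqueness of the canonical form in $G$ the original $u,t,w,v$ all lie in $K(e)$. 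The principal obstacle is the asymmetry of Proposition~2.1---$U(e)$ absorbs $e$ on the right while $U^-(e)$ does so on the left---which prevents a direct mirror of the $v$-decomposition on the $eg=e$ side and forces either the passage to $g^{-1}$ or an analogous opposite Bruhat--Renner analysis to supply the missing symmetry.
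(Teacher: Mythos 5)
Your ``if'' direction and the first half of the ``only if'' direction are fine, and up to the identity $(u_1tv_1)e=e$ your computation is correct; the step $e\in B(we)B$ forcing $we=e$ is essentially the paper's own argument. But there are two genuine problems after that. First, the mirror relation is never actually obtained. You yourself point out the obstacle (Proposition 2.1 gives $U(e)e=\{e\}$ and $eU^-(e)=\{e\}$, so the $v$-decomposition cannot simply be mirrored on the $eg=e$ side), and the two fixes you gesture at do not close it: the Bruhat cell of $g^{-1}$ gives $w^{-1}e=e$, which is the same relation $we=e$ again rather than $ew=e$, and its canonical-form factors $u',v'$ bear no simple relation to $u_1,t,v_1$; an ``opposite'' $B^-$-analysis involves an a priori different Weyl component. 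So the asserted $e(u_1tv_1)=e$ is left unproved. The paper dissolves the asymmetry at the outset: $ge=e=eg$ gives $ege=e$, hence $g\in P(e)\cap P^-(e)=L(e)$, and then \emph{all four} canonical factors $u,t,w,v$ lie in $L(e)=C_G(e)$, which makes both sides of every manipulation available (and yields $ew=e$ from $we=e$ without any extra citation).

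Second, and more seriously, the endgame is wrong rather than merely incomplete. From $(u_1tv_1)e=e=e(u_1tv_1)$ you claim that ``uniqueness of the factorization'' forces $t\in T(e)$ and $u_1=v_1=1$, hence $u=u_2,\ v=v_2\in U(e)\cap K(e)$. This is false and internally inconsistent: since $K(e)\subseteq P(e)\cap P^-(e)=L(e)$ and $L(e)\cap U(e)=\{1\}$, one has $U(e)\cap K(e)=\{1\}$, so your conclusion would force $u=v=1$ for every $g\in K(e)$; but for $\alpha\in\lambda_*(e)$ the element $g=x_\alpha(1)\in U_\alpha$ lies in $K(e)$ (Lemma 7.4(a) of \cite{R1}, quoted in the paper just before the lemma) and has $u_1=x_\alpha(1)\neq 1$, $u_2=t=w=v=1$, contradicting ``$u_1=v_1=1$''. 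The true remaining task is exactly to show that the individual factors of an element of $K(e)\cap L(e)$ each stabilize $e$ on both sides --- which is an instance of the lemma itself, so appealing to uniqueness of the factorization is circular (and you cannot invoke $K(e)=G_{\lambda_*(e)}$, since that is Theorem 2.2, deduced from this lemma). The paper supplies the two ingredients your proposal lacks: the torus step --- from $utev=e$, i.e.\ $te=u^{-1}ev^{-1}$, embed $G$ in some $GL_n$ with $T$ diagonal and $U$ unitriangular to see that $te$ is an idempotent, whence $te=e$; and the unipotent step --- the canonical form constrains $v$ to lie in $\prod_{\alpha>0,\ w\alpha<0}U_\alpha$, and since $w\in W_{\lambda_*(e)}$ these roots lie in $\Phi_{\lambda_*(e)}$, so $ve=ev=e$, after which $u\in K(e)$ follows by cancellation. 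Your argument uses neither the constraint on $v$ coming from the canonical form nor any torus argument, so the ``only if'' direction is not established.
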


\begin{proof}
It is obvious that if $u,v, w, t\in {K}(e)$ then $g=utwv\in
{K}(e)$, where $e \in {\Lambda}$. Now, we assume
$g=utwv\in {K}(e)$, in other words, $utwve=eutwv=e$. Clearly, $utwv\in
{L}(e)={P}(e)\cap {P}^-(e)$. Hence $u, t,w, v\in
{L}(e)$. Then $we = ew$ and $ve = ev$. It follows from
$utwve=e$ that $utwev=e$, that is, ${B}we{B}={
B}e{B}$. Thus $we=e=ew$, and so $w\in {K}(e)$ and $w\in W_{\lambda_*(e)}$. It
follows easily that $utev = e$. Equivalently,
\[
te = u^{-1}ev^{-1}. \tag{1}
\]

To see that $t\in {K}(e)$, we first prove that $te$ is an idempotent. By Theorem 8.6 of
\cite{H2}, we regard ${G}$ as a closed subgroup of some
$GL(n, F_q)$. Then we can choose ${T}, {B}, {U}$,
up to conjugation, such that ${T}$ consists of invertible
diagonal matrices, that ${B}$ is composed of invertible upper
triangular matrices, and that ${U}$ is made of invertible
upper triangular matrices with all diagonal entries 1. The left
hand side of (1) is a diagonal matrix and the right hand of (1) is
upper triangular with diagonal entries 1 or 0. So $te$ is an idempotent. In other words, $tete =
te$. It follows that $ete = e$, and hence $te = e$. Therefore, $t\in {K}(e)$.

To show that $v\in K(e)$, note that $g=utwv$ is in its canonical
form. This tells us that $v\in \Pi_{\a\in {\Psi}} {
U}_{\a} \subseteq {P}(e)$ where ${\Psi} = \{\a \in
{\Phi}_{\lambda(e)}^+ \mid w(\alpha) \in {
\Phi}_{\lambda(e)}^-\}$. Notice that $w\in W_{\lambda_*(e)}$ and that $\lambda(e)$ is a disjoint union of
$\lambda^*(e)$ and $\lambda_*(e)$. It follows that ${\Psi}\subseteq
{\Phi}_{\lambda_*(e)}$. Hence $ve=ev=v$, which shows that $v\in K(e)$.

Finally, $ue=eu=e$, which means that $u\in {K}(e)$.
\hspace{1cm}$\Box$
\end{proof}

\begin{theorem}\label{theorem 2.2} For $e\in
\Lambda$, we have $ K(e) = G_{\lambda_*(e)}$.
\end{theorem}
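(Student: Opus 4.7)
The inclusion $G_{\lambda_*(e)} \subseteq K(e)$ has already been established in the discussion preceding Lemma 2.1, so it remains to prove $K(e) \subseteq G_{\lambda_*(e)}$. The plan is to apply Lemma 2.1 to reduce the problem: for $g \in K(e)$ with canonical form $g = utwv$, each factor $u, t, w, v$ lies in $K(e)$, and I will show each also lies in $G_{\lambda_*(e)}$. The torus factor is immediate since $t \in T(e) \subseteq G_{\lambda_*(e)}$. For the Weyl factor, $w \in W(e) = W_*(e) \times W^*(e)$ by $we = ew = e$; writing $w = w_* w^*$ accordingly, any nontrivial $w^*$ satisfies $w^* e \ne e$ (the generators $s_\alpha$ of $W^*(e)$ do so by definition of $\lambda^*(e)$, and they induce the Weyl group of the $\mathcal{H}$-class of $e$ faithfully), so $w = w_* \in W_*(e) \subseteq G_{\lambda_*(e)}$.

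Turning to $u \in U \cap K(e)$, I would use the Levi decomposition $U = (U \cap L(e)) \cdot U(e)$, where $L(e)$ denotes the Levi of $P(e)$ containing $T$. Since $U(e) \subseteq K(e)$ by Proposition 2.1(i), writing $u = u_L u_R$ with $u_R \in U(e)$ gives $u_L \in K(e) \cap (U \cap L(e))$. The direct product structure $W(e) = W_*(e) \times W^*(e)$ forces $\lambda_*(e)$ and $\lambda^*(e)$ to lie in orthogonal components of the Coxeter graph of $\lambda(e)$, which yields a disjoint decomposition $\Phi_{\lambda(e)}^+ = \Phi_{\lambda_*(e)}^+ \sqcup \Phi_{\lambda^*(e)}^+$. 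Accordingly $u_L$ factors uniquely as $u_L = u_* u^*$ with $u_* \in \prod_{\alpha \in \Phi_{\lambda_*(e)}^+} U_\alpha \subseteq G_{\lambda_*(e)}$ and $u^* \in \prod_{\alpha \in \Phi_{\lambda^*(e)}^+} U_\alpha$.

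The crucial step, and the main obstacle, is showing $u^* = 1$. Since $u_* \in K(e)$, the relation $u_L e = e$ reduces to $u^* e = e$. I would then invoke the structure of the $\mathcal{H}$-class $\mathcal{H}_e = L(e)e$, which is a reductive group with identity $e$, maximal torus $eT$, Weyl group $W^*(e)$, root system $\Phi_{\lambda^*(e)}$, and positive root subgroups $\{eU_\alpha : \alpha \in \Phi_{\lambda^*(e)}^+\}$. The map $u \mapsto ue$ restricts to an isomorphism from $\prod_{\alpha \in \Phi_{\lambda^*(e)}^+} U_\alpha$ onto the unipotent radical of the corresponding Borel subgroup of $\mathcal{H}_e$, so $u^* e = e$ (the identity of $\mathcal{H}_e$) forces $u^* = 1$, and hence $u \in G_{\lambda_*(e)}$. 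The factor $v$ is handled by the symmetric argument on the opposite unipotent radical, using $e U^-(e) = e$ from Proposition 2.1(i).
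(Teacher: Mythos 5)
The step that breaks is your claim that $U(e) \subseteq K(e)$ ``by Proposition 2.1(i)''. That proposition only gives the one--sided statements $U(e)e=\{e\}$ and $eU^-(e)=\{e\}$; membership in $K(e)$ requires killing $e$ on \emph{both} sides, and in general $ex\ne e$ for $1\ne x\in U(e)$ (already in $M_2(F_q)$ with $e=\mathrm{diag}(1,0)$ and $x$ upper unitriangular one has $xe=e$ but $ex\ne e$). Worse, if $U(e)\subseteq K(e)$ held for some $e$ with $U(e)\ne\{1\}$, the theorem you are proving would be false, since $G_{\lambda_*(e)}\subseteq L(e)$ and $L(e)\cap U(e)=\{1\}$. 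This leaves a real gap downstream: after writing $u=u_Lu_R$ with $u_R\in U(e)$ and showing $u^*=1$, you conclude $u\in G_{\lambda_*(e)}$, but $u=u_*u_R$ and you never show $u_R=1$. The deduction $u_L\in K(e)$ can be salvaged without the false inclusion (since $L(e)=P(e)\cap P^-(e)$ centralizes $e$ and $u_Re=e$, the relation $ue=e$ gives $u_Le=e$, hence $eu_L=e$ and then $eu=e$ gives $eu_R=e$), but you would then still have to prove that $u_Re=e=eu_R$ forces $u_R=1$, i.e.\ injectivity of $x\mapsto ex$ on $U(e)$ --- a nontrivial structural fact that you neither state nor justify. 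A smaller point: the ``symmetric argument'' for $v$ using $eU^-(e)=\{e\}$ does not apply as stated, because in the canonical form $v$ lies in $U$, not in $U^-$.

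For comparison, the paper avoids all of this for the unipotent part: it sets $U_e=\{x\in U\mid xe=ex=e\}=U\cap K(e)$, notes that $U_e$ is a $T$-stable subgroup of $U$ and hence is generated by the root subgroups it contains (Proposition 28.1 of Humphreys), and then invokes Exercise 2 of 7.7.2 in Renner's book to conclude that any root $\beta$ with $U_\beta e=eU_\beta=e$ lies in $\Phi_{\lambda_*(e)}$; no Levi factorization of $u$ and no injectivity claim are needed. Your treatment of $t$ and $w$, and the orthogonal decomposition $\Phi^+_{\lambda(e)}=\Phi^+_{\lambda_*(e)}\sqcup\Phi^+_{\lambda^*(e)}$, are fine and close in spirit to the paper's Lemma 2.1, but the argument for $u$ (and $v$) must be repaired along one of these lines before the proof is complete.
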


\begin{proof}
It suffices to show ${
G}_{\lambda_*(e)} \supseteq {K}(e)$. For any $g = uwtv \in
{K}(e)$ in its canonical form, it follows from Lemma
\ref{lemma 2.1} that $u, w, t, v \in {K}(e)$. It is clearly
true that $w, t, v \in {G}_{\lambda_*(e)}$ by the proof of
Lemma \ref{lemma 2.1} We now show that $u\in {
G}_{\lambda_*(e)}$. Let ${U}_e = \{x\in {U} \mid xe =
ex = e\}$. Then ${U}_e$ is a closed, ${T}$-stable
subgroup of ${U}$. It follows from Proposition 28.1 of
\cite{H2} that the subgroup ${U}_e\subseteq {
G}_{\lambda_*(e)}$ is generated by certain root subgroups. Since
$u\in {U}_e$, we can then assume $u\in \prod_{i} {
U}_{\beta_i}$ where ${U}_{\beta_i}e = e{
U}_{\beta_i}=e$ and $\beta_i\in {\Phi}$. By Exercise 2 of
7.7.2 in \cite{R1}, we see that $\beta_i\in {\Phi}_{\lambda_*(e)}$.
This completes the proof. \hspace{1cm}$\Box$
\end{proof}


\section{Orders of Finite Reductive Monoids}
Define an action of $G\times G$ on $M$ by:
\[
    (g, h)m = g m h^{-1},
\]
where $m\in M$ and $(g, h)\in G\times G$. The structure of the isotropic group $(G\times G)_e$ for $e\in
\Lambda$ plays an important role
in the procedure of obtaining formulas for the orders of the
finite reductive monoids. The following result describes the
structure of $(G\times G)_e$ and its cardinality $|(G\times
G)_e|$, where $e\in \Lambda$.
\begin{proposition} \label{orbit} Let $e\in \Lambda$. Then
\begin{enumerate}
\item $(G\times G)_e   = \{ (lu, lkv)\in G\times G \mid u\in U(e), v\in U^-(e), l\in L(e) \mbox{ and } k\in K(e) \}$.
\vspace{-2mm}
\item $|(G\times G)_e| = |P(e)||U(e) ||K(e)|.$
\end{enumerate}
\end{proposition}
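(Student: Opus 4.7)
The plan is to start by translating the isotropy condition into a form amenable to Levi analysis. A pair $(g,h)$ lies in $(G\times G)_e$ exactly when $geh^{-1}=e$, i.e.\ $ge=eh$. Multiplying this identity on the left by $e$ gives $ege=eh=ge$, so $g\in P(e)$; multiplying on the right by $e$ gives $ehe=ge=eh$, so $h\in P^-(e)$. Thus both coordinates automatically admit Levi decompositions.

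Next I would use the Levi decompositions $P(e)=L(e)\ltimes U(e)$ and $P^-(e)=L(e)\ltimes U^-(e)$, the annihilation relations $U(e)e=\{e\}=eU^-(e)$ from Proposition 2.1, and the commutation $el''=l''e$ for $l''\in L(e)=P(e)\cap P^-(e)$. Writing $g=lu$ and $h=l''v$ in these decompositions simplifies the isotropy equation: $ge=lue=le$ and $eh=el''v=l''ev=l''e$, so $ge=eh$ collapses to $le=l''e$, i.e.\ $(l^{-1}l'')e=e$. Because $l^{-1}l''\in L(e)$ also commutes with $e$, one automatically has $e(l^{-1}l'')=e$, so $k:=l^{-1}l''\in K(e)$. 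Setting $l''=lk$ gives $h=lkv$, establishing the inclusion $\subseteq$ in (i). The reverse inclusion is a direct check: $(lu)e=le$ while $e(lkv)=(le)kv=l(ek)v=lev=le$.

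For part (ii) I would interpret (i) as a surjection $\phi:L(e)\times U(e)\times K(e)\times U^-(e)\to(G\times G)_e$ given by $(l,u,k,v)\mapsto(lu,lkv)$. For injectivity, uniqueness of the Levi decomposition in $P(e)$ pins down $l$ and $u$ from the first coordinate, and the second coordinate then yields $(k')^{-1}k=v'v^{-1}$. Any element of $K(e)$ satisfies $ke=e=ek$ and therefore lies in $P(e)\cap P^-(e)=L(e)$; hence the common element sits in $L(e)\cap U^-(e)=\{1\}$, forcing $k=k'$ and $v=v'$. Therefore
$|(G\times G)_e|=|L(e)|\,|U(e)|\,|K(e)|\,|U^-(e)|=|P(e)|\,|U(e)|\,|K(e)|$,
after using $|P(e)|=|L(e)|\,|U(e)|$ together with the equality $|U(e)|=|U^-(e)|$ for the opposite unipotent radicals.

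The main obstacle, modest as it is, is ensuring that $\phi$ is a bijection; this rests on $K(e)\cap U^-(e)=\{1\}$, which follows cleanly once one observes the elementary inclusion $K(e)\subseteq L(e)$ from the definitions of $P(e)$ and $P^-(e)$. Note that the deeper Theorem 2.2 is not needed here, only for later applications. The rest is bookkeeping with the Levi structures and the basic relations $U(e)e=\{e\}=eU^-(e)$.
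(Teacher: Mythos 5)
Your proof is correct and follows essentially the same route as the paper: derive $g\in P(e)$, $h\in P^-(e)$ from the isotropy condition, use the Levi decompositions together with $U(e)e=\{e\}=eU^-(e)$ to reduce to a condition on the Levi parts, and conclude that they differ by an element of $K(e)$. Your only addition is to spell out the counting in (ii) via the bijection $(l,u,k,v)\mapsto(lu,lkv)$, using $K(e)\subseteq L(e)$ and $L(e)\cap U^-(e)=\{1\}$, a detail the paper leaves as ``not difficult to see.''
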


\begin{proof} Assume that $geh^{-1}=e$ for $e\in \Lambda$ and $(g,h)\in G\times G$.
Then $egeh^{-1}=ee=e=geh^{-1}$, and hence $ege=ge$, which means
$g\in P(e)$. Similarly, $h\in P^-(e)$. Let $P(e)=U(e) L(e) $ and
$P^-(e) =U^-(e)L(e) $ be the Levi decompositions with $L(e)
=P(e)\cap P^-(e)$. We have $g=l_1u$ and $h=l_2v$ for $u\in U(e) $,
$v\in U^-(e)  $, and $l_1, l_2\in L(e) $. It follows from (i) of
Proposition \ref{BN} that
\[
geh^{-1}=l_1uev^{-1}l_2^{-1}=el_1l_2^{-1}=l_1l_2^{-1}e=e,
\]
and hence $l_1l_2^{-1}\in K(e)$. This proves (i). It not
difficult to see that (ii) is correct by (i) and $|U(e)| =
|U^-(e)|$. \hspace{1cm}$\Box$
\end{proof}


\begin{theorem}\label{formula}\label{f1} Let $M$ be a finite reductive monoid with
zero and unit group $G$. Then
\[
  |M| = \sum_{e\in \Lambda}\frac{|G|^2}{|P(e)||K(e)||U(e) |}
      = \sum_{e\in \Lambda} [ G : P(e)]^2[L(e) : K(e) ]. \\
\]
\end{theorem}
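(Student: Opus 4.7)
The plan is to decompose $M$ into $G\times G$-orbits and apply the orbit-stabilizer theorem, using Proposition 3.1 for the size of each stabilizer.

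First I would recall from Section 2 that $M$ admits the decomposition $M = \bigsqcup_{e\in\Lambda} GeG$ into $G\times G$-orbits under the action $(g,h)m = gmh^{-1}$. Since the orbit of $e\in\Lambda$ under this action is exactly $GeG$, the orbit-stabilizer theorem gives
\[
    |GeG| = \frac{|G\times G|}{|(G\times G)_e|} = \frac{|G|^2}{|(G\times G)_e|}.
\]
Summing over the cross section lattice yields
\[
    |M| = \sum_{e\in\Lambda} \frac{|G|^2}{|(G\times G)_e|}.
\]

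Next I would plug in the stabilizer cardinality from part (ii) of Proposition 3.1, namely $|(G\times G)_e| = |P(e)|\,|U(e)|\,|K(e)|$. This gives the first of the claimed equalities directly:
\[
    |M| = \sum_{e\in\Lambda} \frac{|G|^2}{|P(e)|\,|K(e)|\,|U(e)|}.
\]

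To obtain the second equality, I would use the Levi decomposition $P(e) = U(e)L(e)$ (with $U(e)\cap L(e) = \{1\}$), which gives $|P(e)| = |U(e)|\,|L(e)|$. Writing $|G|^2 / (|P(e)|\,|K(e)|\,|U(e)|)$ as $[G:P(e)] \cdot |G| / (|U(e)|\,|K(e)|)$ and then inserting another factor of $|P(e)|/|P(e)|$, one rearranges to
\[
    \frac{|G|^2}{|P(e)|\,|K(e)|\,|U(e)|} = [G:P(e)]^2 \cdot \frac{|L(e)|}{|K(e)|} = [G:P(e)]^2\,[L(e):K(e)],
\]
where in the last step I use that $K(e)\subseteq L(e)$ (which is evident since $K(e)\subseteq P(e)\cap P^-(e) = L(e)$, as any $g\in K(e)$ satisfies $ge = e = eg$, placing $g$ in both $P(e)$ and $P^-(e)$).

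The argument is essentially a bookkeeping exercise once Proposition 3.1 is in hand, so there is no serious obstacle; the only subtle point is the containment $K(e)\subseteq L(e)$, needed so that $[L(e):K(e)]$ makes sense as an integer index, but this falls out immediately from the defining condition on $K(e)$.
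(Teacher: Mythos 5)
Your proposal is correct and follows essentially the same route as the paper: decompose $M=\bigsqcup_{e\in\Lambda}GeG$, apply orbit--stabilizer with $|(G\times G)_e|=|P(e)||U(e)||K(e)|$ from Proposition 3.1, and use the Levi decomposition $|P(e)|=|L(e)||U(e)|$ for the second equality. The extra observation that $K(e)\subseteq L(e)$ is a harmless (and correct) bit of added care that the paper leaves implicit.
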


\begin{proof} Since $ M = \bigsqcup_{e\in \Lambda} GeG$ and $|GeG| = |G|^2 / |(G\times G)_e|$, the first
equality follows from Proposition \ref{orbit} The second equality follows from that $|P(e)| = |L(e)||U(e)|$.
$\hfill\Box$
\end{proof}

{\bf Remark:} For an abstract monoid of Lie type, Yan found the
above formula in his Ph.~D. thesis in 1996 (see also \cite{YAN})
in a different way. He illustrated this formula by checking the
order of $M_n(F_q)$.


The result below follows immediately from Theorems \ref{theorem 2.2} and \ref{formula}

\begin{theorem}\label{f2}  The order of a finite reductive
monoid $M$ with zero is given by
\[
    |M| = \sum_{e \in \Lambda}\frac{|G|^2}{|P(e)||G_{\lambda_*(e)}||U(e)|}.
\]
\end{theorem}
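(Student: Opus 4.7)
The plan is essentially a one-line substitution. Theorem \ref{f1} already expresses
\[
|M| = \sum_{e\in\Lambda}\frac{|G|^2}{|P(e)|\,|K(e)|\,|U(e)|},
\]
and Theorem \ref{theorem 2.2} identifies $K(e)$ with $G_{\lambda_*(e)}$ as subgroups of $G$, so in particular $|K(e)| = |G_{\lambda_*(e)}|$ for every $e \in \Lambda$. Replacing $|K(e)|$ by $|G_{\lambda_*(e)}|$ term by term in the sum above produces exactly the desired expression, so no additional argument is needed beyond citing the two earlier results.

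The only point worth verifying is that the identification from Theorem \ref{theorem 2.2} is valid at every element of $\Lambda$ that appears in the sum, including the boundary idempotents $e=1$ and $e=0$. Both extreme cases are already explicitly handled at the start of the proof of Theorem \ref{theorem 2.2}: for $e=1$ each side reduces to the trivial group, and for $e=0$ each side equals all of $G$. Hence the equality $|K(e)| = |G_{\lambda_*(e)}|$ holds uniformly on $\Lambda$, and Theorem \ref{f2} follows at once.

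Since the substitution is trivial, there is no real obstacle. If one wanted to make the statement stand entirely on its own without invoking Theorem \ref{f1}, one would have to reprove the orbit count using the action $(g,h)\cdot m = gmh^{-1}$ and the structural description of $(G\times G)_e$ in Proposition \ref{orbit}, but that just duplicates the proof of Theorem \ref{f1}; the cleanest path is to state it as an immediate corollary, which is what I would do.
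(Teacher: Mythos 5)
Your proposal is correct and matches the paper exactly: the paper states Theorem \ref{f2} as an immediate consequence of Theorem \ref{theorem 2.2} (which gives $K(e)=G_{\lambda_*(e)}$, hence $|K(e)|=|G_{\lambda_*(e)}|$ for every $e\in\Lambda$) substituted into the formula of Theorem \ref{f1}. No further argument is needed.
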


We further refine the formulas in Theorems \ref{f1} and \ref{f2}
Let $B = UT$ be the Levi decomposition with the Levi factor $T$
and the unipotent radical $U$ of $B$. Let $J \subseteq\Delta$ and
$P_J$ a standard parabolic subgroup of $G$. Define $W_J$ to be the
parabolic subgroup of $W$ generated by reflections determined by
simple roots in $J$. For any $w\in W$, let $l(w)$ denote the length of $w$.

\begin{lemma}\label {lemma 3.1}
\begin{enumerate}
\item $|G| = |U||T|\sum_{w\in W} q^{l(w)}$.
\item $|P_J| = |U||T| \sum_{w\in W_J} q^{l(w)}$, for any $J \subset
\Delta$.
\end{enumerate}
\end{lemma}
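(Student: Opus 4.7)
The plan is to prove both parts via the Bruhat decomposition of $G$ and its standard refinement for parabolic subgroups.

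For part (i), I would start from the Bruhat decomposition
\[
    G = \bigsqcup_{w\in W} BwB,
\]
and compute $|BwB|$ for each $w$. The standard argument is to observe that each double coset satisfies $|BwB| = |B|\cdot|U_w^-|$, where $U_w^- = U \cap wU^-w^{-1}$ is the product of the root subgroups $U_\alpha$ indexed by the positive roots $\alpha$ sent to negative roots by $w^{-1}$. Since each $U_\alpha$ is isomorphic to the additive group of $F_q$ (by the $F_q$-split hypothesis implicit in the setting), we have $|U_w^-| = q^{l(w)}$. Combining with the Levi decomposition $B = UT$, which gives $|B| = |U||T|$ because $U \cap T = \{1\}$, we obtain
\[
    |G| = \sum_{w\in W}|BwB| = \sum_{w\in W}|U||T|\,q^{l(w)} = |U||T|\sum_{w\in W}q^{l(w)}.
\]

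For part (ii), I would apply exactly the same argument to the standard parabolic subgroup $P_J$, which admits the analogous Bruhat-type decomposition
\[
    P_J = \bigsqcup_{w\in W_J} BwB.
\]
Since each double coset $BwB \subseteq P_J$ for $w\in W_J$ has the same cardinality $|U||T|\,q^{l(w)}$ computed above, summing gives the claimed formula.

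I do not expect a genuine obstacle here: both statements are classical consequences of the $BN$-pair structure and the standard Bruhat decomposition (see, e.g., the references \cite{C1, H2} cited earlier in the paper). The only point deserving care is the cardinality $|BwB| = |B|\,q^{l(w)}$, which is the one step I would cite rather than rederive, since it follows from the description of the canonical form used already in the proof of Lemma \ref{lemma 2.1}. Everything else is a direct counting argument.
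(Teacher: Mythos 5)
Your proposal is correct and follows essentially the same route as the paper: the paper proves (i) by invoking the counting argument of Section 8.6 of Carter (which is exactly your Bruhat-decomposition computation) and proves (ii) by the uniqueness of the canonical form $bwu$ with $w\in W_J$ and $u\in U_w^-$ of order $q^{l(w)}$, which is your double-coset count for $P_J=\bigsqcup_{w\in W_J}BwB$. The only cosmetic difference is your convention for $U_w^-$ (roots sent negative by $w^{-1}$ rather than $w$), which is harmless since $l(w)=l(w^{-1})$.
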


\begin{proof}
The idea of Section 8.6 of \cite {C1} applies here proving (i). As for (ii), the canonical form of elements in $G$
indicates that each element of $P_J$ has a unique expression in
the form $bwu$, where $b\in B, w\in W_J$, and $u\in U_{w}^{-} =
\prod\limits_{\a\in \Psi}U_{\a}$ with $\Psi = \{\a \in \Phi^+ \mid
w(\a) \in \Phi^-\}$. Then
$$
|P_J| = |U||T|\sum_{w\in W_J}|U_{w}^{-}|.
$$
But then $|U_w^-| = q^{l(w)}$. Therefore, (ii) is proved.
\hspace{1cm}$\Box$
\end{proof}


\begin{theorem}\label {theorem DDE} For $e\in \Lambda$,
let $D(e)$ be the set of minimal length left coset representatives
of $W(e)$ in $W$, and $D_*(e)$ the set of minimal length left
coset representatives of $W_*(e)$ in $W$. Then
$$
|M| =  \sum_{ e\in \Lambda } {\Big(}[T:T(e)]
~q^{N^*(e)}\sum_{w\in D(e)} q^{l(w)} \sum_{w\in D_*(e)} q^{l(w)}
{\Big )},
$$
where $T(e) = \{t\in T \mid te = e = et \}$ is a maximal torus of
$G_{\lambda_*(e)}$ and $N^*(e) = |\Phi_{\lambda^*(e)}^+|$, the
number of positive roots in $\Phi_{\lambda^*(e)}$.
\end{theorem}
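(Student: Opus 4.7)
The plan is to derive the formula directly from the second formula in Theorem 3.3, namely
\[
|M| = \sum_{e\in\Lambda}\frac{|G|^2}{|P(e)|\,|K(e)|\,|U(e)|},
\]
by computing each of the four factors in the summand in terms of the Poincar\'e polynomials of parabolic subgroups of $W$. I would first apply Lemma 3.1: part (i) gives $|G|=|U||T|\sum_{w\in W}q^{l(w)}$, and since $P(e)=P_{\lambda(e)}$ is the standard parabolic corresponding to $\lambda(e)\subseteq\Delta$, part (ii) gives $|P(e)|=|U||T|\sum_{w\in W(e)}q^{l(w)}$.

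Next I would evaluate $|U(e)|$ and $|K(e)|$. From the Levi decomposition $P(e)=L(e)U(e)$, the unipotent radical $U(e)$ is the product of the root subgroups $U_\alpha$ for $\alpha\in\Phi^+\setminus\Phi^+_{\lambda(e)}$, so $|U(e)|=q^{N-N(e)}$, where $N=|\Phi^+|$ and $N(e)=|\Phi^+_{\lambda(e)}|$. For $|K(e)|$ I would use Theorem 2.2 to identify $K(e)=G_{\lambda_*(e)}$, which is a reductive group with maximal torus $T(e)$, Weyl group $W_*(e)$, and a Borel subgroup whose unipotent radical has order $q^{N_*(e)}$ with $N_*(e)=|\Phi^+_{\lambda_*(e)}|$. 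The same Bruhat-style argument that proves Lemma 3.1 then applies inside $G_{\lambda_*(e)}$ to yield
\[
|K(e)|=|T(e)|\,q^{N_*(e)}\sum_{w\in W_*(e)}q^{l(w)}.
\]

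Substituting these into the summand and using $N(e)=N^*(e)+N_*(e)$, the powers of $q$ collapse cleanly and the $|U|$, $|T|$ factors cancel to give
\[
\frac{|G|^2}{|P(e)|\,|K(e)|\,|U(e)|}
=[T:T(e)]\,q^{N^*(e)}\,
\frac{\bigl(\sum_{w\in W}q^{l(w)}\bigr)^{2}}{\sum_{w\in W(e)}q^{l(w)}\,\sum_{w\in W_*(e)}q^{l(w)}}.
\]
To finish, I would invoke the standard factorization of the Poincar\'e polynomial over minimal-length left coset representatives: for any $J\subseteq\Delta$, every $w\in W$ factors uniquely as $w=du$ with $d\in D_J$, $u\in W_J$, and $l(w)=l(d)+l(u)$, whence $\sum_{w\in W}q^{l(w)}=\bigl(\sum_{d\in D_J}q^{l(d)}\bigr)\bigl(\sum_{u\in W_J}q^{l(u)}\bigr)$. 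Applying this identity with $J=\lambda(e)$ and with $J=\lambda_*(e)$ replaces the two ratios of Poincar\'e polynomials by $\sum_{w\in D(e)}q^{l(w)}$ and $\sum_{w\in D_*(e)}q^{l(w)}$, yielding the claimed formula after summing over $\Lambda$.

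The only nontrivial step is the evaluation of $|K(e)|$; everything else is a bookkeeping exercise. The key point I would need to justify carefully is that $G_{\lambda_*(e)}$, as defined in Section 2.1, is genuinely a connected reductive group with root system $\Phi_{\lambda_*(e)}$, maximal torus $T(e)$, and Weyl group $W_*(e)$, so that Lemma 3.1(i) can be applied to it; once that is in hand, the rest of the proof is purely algebraic manipulation and the Weyl-group factorization identity.
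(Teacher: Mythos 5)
Your proposal is correct and follows essentially the same route as the paper: start from Theorem 3.1, compute $|G|$, $|P(e)|$, $|U(e)|$ via Lemma 3.1 and the root-subgroup description, evaluate $|K(e)|=|G_{\lambda_*(e)}|=|T(e)|\,q^{N_*(e)}\sum_{w\in W_*(e)}q^{l(w)}$ via Theorem 2.2, and finish with the factorization $\sum_{w\in W}q^{l(w)}=\bigl(\sum_{d\in D_J}q^{l(d)}\bigr)\bigl(\sum_{u\in W_J}q^{l(u)}\bigr)$. The only cosmetic difference is that you manipulate $\frac{|G|^2}{|P(e)||K(e)||U(e)|}$ directly rather than the equivalent form $[G:P(e)]^2[L(e):K(e)]$ used in the paper, and you rightly flag the application of Lemma 3.1 to $G_{\lambda_*(e)}$ as the step deserving justification, which the paper takes for granted.
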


\begin{proof}

To see this is true, we use Theorem \ref{formula} First, $|G| =
|U||T|\sum_{w\in W} q^{l(w)}$ by (i) of Lemma \ref{lemma 3.1}
Then, consider parabolic subgroups $P(e) = BW(e)B$ of $G$. It
follows from (ii) of Lemma \ref{lemma 3.1} that $ |P(e)| = |U||T|
\sum_{w\in W(e)} q^{l(w)}, $ where $T\subseteq B$ is the maximal
torus of $G$, and $U$ the unipotent radical of $B$. So
\[ [G:P(e)]
=\frac{ \sum_{w\in W} q^{l(w)} }  { \sum_{w\in W(e)} q^{l(w)}
}.\tag{1}
\]
Let $U(e)$ be the unipotent group of $P(e)$. Then $ U(e) = \prod
\limits_{\a\in\Phi^+ \setminus \Phi_{\lambda(e)}} U_{\a}$ by a
result on page 119 of \cite{C1}, and hence $|U(e)| =
q^{N - N(e)}$ where $N$ and $N(e)$ are the numbers of positive
roots in $\Phi$ and $\Phi_{\lambda(e)}$, respectively. By the Levi
decomposition of $P(e) = L(e)U(e)$, we see that
$$
|L(e)| = q^{N(e)}|T| \sum_{w\in W(e)} q^{l(w)}.
$$
From Theorem \ref{theorem 2.2} and Lemma \ref{lemma 3.1}, $
|K(e)|= |G_{\lambda_*(e)}| = q^{N_*(e)} |T(e)| \sum_{w\in W_*(e)}
q^{l(w)}, $ where $N_*(e)= |\Phi^+_{\lambda_*(e)}|$. We then
obtain
\[
[L(e):K(e)]  = [T:T(e)] ~q^{N(e) - N_*(e)}
 \sum_{w\in W(e)} q^{l(w)} \Big/
\sum_{w\in W_*(e)} q^{l(w)}. \tag{2}
\]
It follows from (1) and (2) that
$$
\begin{aligned}
{[G:P(e)]^2[L(e):K(e)]}  &= [T:T(e)] ~q^{N(e) - N_*(e)} \frac{
(\sum_{w\in W} q^{l(w)})^2 } { \sum_{w\in W(e)} q^{l(w)}
\sum_{w\in W_*(e)} q^{l(w)} },\\
                        &= [T:T(e)] ~q^{N^*(e)} \sum_{w\in D(e)}
q^{l(w)} \sum_{w\in D_*(e)} q^{l(w)},
\end{aligned}
$$
where $N^*(e) = N(e) - N_*(e)$ is the number of positive roots in
$\Phi_{\lambda^*(e)}$, and $D(e)$ and $D_*(e)$ are the sets of
minimal length left coset representatives of $W(e)$ and $W_*(e)$
in $W$, respectively (more information on $D(e)$ and $D_*(e)$ can
be found in \cite{PU5, R1}). Using Theorem \ref{formula}, we
complete the proof of Theorem \ref{theorem DDE}
\hspace{1cm}$\Box$
\end{proof}

Although this is a useful theoretical formula for the order of
$M$, it is not easy to use in practice because
\[
\sum_{w\in D(e)} q^{l(w)} \sum_{w\in D_*(e)} q^{l(w)}
\]
is very cumbersome. We will give a simplified version of the
formula in Theorem \ref{theorem fTE} below.

For any $e\in\Lambda$, suppose that $\lambda_*(e)$
has $s$ connected components: $J_1, J_2, ..., J_s$, and that
$\lambda^*(e)$ has $t-s$ connected components: $J_{s+1}, , \cdots,
J_t$ with $|J_k| = m_k$ for $k=1, ..., t$. The degrees $d_1, d_2,
\cdots, d_{m_k}$ of the basic polynomial invariants of $W_{J_k}$
depend on the type of $J_k$ (one of the $A_l, B_l, C_l, D_l, E_6,
E_7, E_8, F_4, G_2$). To emphasize this dependence and to agree to
the notation in Chapter 9 of \cite {C1}, for $k=1, ..., t$, let
\[
P_{W_{J_k}}(q) = \prod_{i=1}^{m_k} \Big(\frac
{q^{d_i}-1}{q-1}\Big).
\]
Clearly, $P_{W}(q) = \prod\limits_{i=1}^{l}\Big(\frac
{q^{d_i}-1}{q-1}\Big)$. We can now prove the following more
practical formula.


\begin{theorem} \label{theorem fTE}
The order of a finite reductive monoid $M$ with 0 is
\[
|M| =  \sum_{ e\in \Lambda } \frac { [T:T(e)]
~q^{N^*(e)} P_W^2(q) } { \prod \limits_{k=1}^{s} P_{W_{J_k}}^2(q)
\prod\limits_{k=s+1}^{t} P_{W_{J_k}}(q) } .
\]

\end{theorem}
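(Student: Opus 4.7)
The plan is to simplify the formula of Theorem 3.3 by rewriting each of the two length-generating sums as a ratio of Poincar\'e polynomials of Coxeter groups. The key input is Solomon's factorization: for any parabolic subgroup $W_J\subseteq W$ and any $w\in W$, there is a unique decomposition $w=uv$ with $u$ a minimal-length left coset representative of $W_J$ and $v\in W_J$, satisfying $l(w)=l(u)+l(v)$. Applied to $W(e)$ and to $W_*(e)$ this yields
\[
\sum_{w\in D(e)} q^{l(w)} = \frac{\sum_{w\in W} q^{l(w)}}{\sum_{w\in W(e)} q^{l(w)}}, \qquad \sum_{w\in D_*(e)} q^{l(w)} = \frac{\sum_{w\in W} q^{l(w)}}{\sum_{w\in W_*(e)} q^{l(w)}}.
\]

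Next I would invoke the classical product formula $\sum_{w\in W_J} q^{l(w)} = P_{W_J}(q)$ for any standard parabolic subgroup (Chapter 9 of \cite{C1}), where $P_{W_J}(q)$ is the Poincar\'e polynomial written through the degrees of the basic polynomial invariants of $W_J$. Since the Coxeter group of a disjoint union of diagrams is the direct product of the component Coxeter groups, the polynomial $P_{W_J}(q)$ factors as $\prod_i P_{W_{J_i}}(q)$ over the connected components $J_i$ of $J$. Using the decomposition $\lambda(e) = \lambda_*(e) \sqcup \lambda^*(e)$ with $W(e)=W_*(e)\times W^*(e)$, and labelling the connected components of $\lambda_*(e)$ by $J_1,\dots,J_s$ and those of $\lambda^*(e)$ by $J_{s+1},\dots,J_t$, I obtain
\[
\sum_{w\in W(e)} q^{l(w)} = \prod_{k=1}^t P_{W_{J_k}}(q), \qquad \sum_{w\in W_*(e)} q^{l(w)} = \prod_{k=1}^s P_{W_{J_k}}(q).
\]

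Substituting these into the two ratios above and taking the product gives
\[
\sum_{w\in D(e)} q^{l(w)} \; \sum_{w\in D_*(e)} q^{l(w)} = \frac{P_W^2(q)}{\prod_{k=1}^s P_{W_{J_k}}^2(q) \prod_{k=s+1}^t P_{W_{J_k}}(q)},
\]
since the single factor $\prod_{k=1}^s P_{W_{J_k}}(q)$ appears once through $W_*(e)$ and once (together with $\prod_{k=s+1}^t P_{W_{J_k}}(q)$) through $W(e)$. Plugging this identity into the formula of Theorem 3.3 produces the stated expression for $|M|$.

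The argument is essentially a bookkeeping exercise: the only nonroutine ingredients, Solomon's factorization and the invariant-theoretic form of $P_W(q)$, are already standard and cited. The main point requiring care, and the only place where one could easily misindex, is the separate tracking of the components $J_1,\ldots,J_s$ of $\lambda_*(e)$ (which enter squared in the denominator, once from $W_*(e)$ and once from the $W_*(e)$-part of $W(e)$) versus the components $J_{s+1},\ldots,J_t$ of $\lambda^*(e)$ (which enter only once, through $W^*(e)\subseteq W(e)$); the decomposition $W(e)=W_*(e)\times W^*(e)$ recorded in Section~2 ensures this split is clean, so no genuine obstacle arises.
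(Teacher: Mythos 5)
Your proposal is correct and follows essentially the same route as the paper: it combines Theorem 3.3 with Solomon's theorem (Theorem 9.4.9 of Carter) expressing $\sum_{w\in W_J}q^{l(w)}$ as $P_{W_J}(q)$, factored over the connected components of $\lambda_*(e)$ and $\lambda^*(e)$. Your explicit use of the coset factorization to rewrite $\sum_{w\in D(e)}q^{l(w)}$ and $\sum_{w\in D_*(e)}q^{l(w)}$ as ratios merely makes visible a step that is already implicit in the paper's proof of Theorem 3.3, so there is no substantive difference.
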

\begin{proof}
A theorem of Solomon, Theorem 9.4.9 of \cite {C1}, tells us that $\sum\limits_{w\in W} q^{l(w)} = P_W(q),$
\[
\begin{aligned}
\sum_{w\in W_*(e)} q^{l(w)} &= P_{W_*(e)}(q) = \prod \limits_{k=1}^{s} P_{W_{J_k}}(q), \mbox { ~and }\\
\sum_{w\in W(e)} q^{l(w)}   &= P_{W(e)}(q) = \prod \limits_{k=1}^{t} P_{W_{J_k}}(q) = \prod \limits_{k=1}^{s} P_{W_{J_k}}(q) \prod \limits_{k=s+1}^{t} P_{W_{J_k}}(q).\\
\end{aligned}
\]
Theorem \ref {theorem fTE} follows from these identities and Theorem \ref {theorem DDE} $\hfill\Box$
\end{proof}

\section{Applications}

In this section we show applications of our results by finding an explicit formula for the orders of $F_q$-split finite  $\J$-irreducible monoids, and then illustrate the formula by giving a precise description of the orders of symplectic finite reductive monoids. We then describe the connections between these orders and the H-polynomials as well as Betti numbers of the projective varieties of symplectic monoids.

\subsection{Orders of $F_q$-split Finite $\J$-irreducible Monoids}
Let $G_0$ be a simple algebraic group of type $X_l$, where $X_l = A_l, B_l, C_l, D_l, E_6, E_7, E_8, F_4$, or $G_2$, and $\rho: G_0\to GL(V)$ be an irreducible representation associated with any dominant weight. Then ${\bold M} = \overline {K^*\rho(G_0)}$ is a $\J$-irreducible monoid.  Suppose that $\sigma$ is the standard Frobenius map of ${\bold M}$. Then $M = \{x\in {\bold M} \mid \sigma(x)=x\}$ is a finite reductive monoid with 0, and it is called a finite $\J$-irreducible monoid. Let $T$ be a maximal torus of the unit group $G$ of $M$, and let $\Lambda$ be the cross section lattice of $M$. Recall that $T(e) = \{t\in T \mid te = e = et \}$, for $e\in \Lambda$. In this section we assume that ${\bold M}$ is $F_q$-split.

\begin{lemma}\label{lemma TTE} For $e\in
\Lambda\setminus \{0\}$,
$$[T:T(e)] =(q-1)^{|{\lambda^*(e)}| + 1}.$$
\end{lemma}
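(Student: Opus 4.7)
The plan is to invoke Theorem 2.2 to identify $T(e)$ as the maximal torus of $K(e)=G_{\lambda_*(e)}$, and then to compute the rank of the quotient reductive group $\bar L(e):=L(e)/K(e)$, where $L(e)=P(e)\cap P^-(e)$ is the Levi subgroup of $P(e)$. For $g\in L(e)$ both $ge=e$ and $eg=e$ amount to $g$ fixing $\mathrm{Im}(e)$ pointwise, so $K(e)$ is exactly the normal kernel of the representation $L(e)\to GL(\mathrm{Im}(e))$, and hence $\bar L(e)$ is a reductive group acting faithfully on $\mathrm{Im}(e)$. Because $\mathbf{M}$ is $F_q$-split, so are $L(e)$, $K(e)$ and $\bar L(e)$; the maximal torus $T/T(e)$ of $\bar L(e)$ is therefore a split $F_q$-torus, giving
\[
[T:T(e)] = (q-1)^{\mathrm{rank}(\bar L(e))}.
\]
The lemma thus reduces to the claim $\mathrm{rank}(\bar L(e))=|\lambda^*(e)|+1$.

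I would split this rank into its semisimple and central parts. Since $e_0\le e$ in $\Lambda$, the highest weight vector $v_\mu$ of $\rho$ lies in $\mathrm{Im}(e_0)\subseteq\mathrm{Im}(e)$, and by the Putcha-Renner classification of $\J$-irreducible monoids, $\mathrm{Im}(e) = L(e)\cdot v_\mu$ is the irreducible $L(e)$-submodule of $V$ with highest weight $\mu$. Moreover, $\lambda^*(e)$ and $\lambda_*(e)$ are disjoint and mutually orthogonal subsets of $\Delta$, so the derived group $[L(e),L(e)]$ is an almost direct product $L^*(e)\cdot L_*(e)$ with root systems $\Phi_{\lambda^*(e)}$ and $\Phi_{\lambda_*(e)}$. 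As $\lambda_*(e)$ consists of simple roots orthogonal to $\mu$, the factor $L_*(e)$ fixes $v_\mu$ and therefore acts trivially on $\mathrm{Im}(e)$. The Putcha-Renner condition that no connected component of $\lambda^*(e)$ lies in $I_0:=\{\alpha\in\Delta:\langle\alpha^\vee,\mu\rangle=0\}$ then forces $\mu$ to restrict non-trivially to each simple factor of $L^*(e)$, so $L^*(e)\to \bar L(e)$ has only finite kernel. Consequently the semisimple rank of $\bar L(e)$ equals $|\lambda^*(e)|$.

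For the central rank, irreducibility of $\mathrm{Im}(e)$ together with Schur's lemma tell us that $Z(\bar L(e))^\circ$ is the connected group of scalars inside $\bar L(e)$. The scalar subtorus $K^*\cdot I_V\subseteq Z(G)$ arising from the construction $\mathbf{M}=\overline{K^*\rho(G_0)}$ injects into $\bar L(e)$ through its non-trivial scalar action on the non-zero space $\mathrm{Im}(e)$ and already exhausts this connected center, so $\dim Z(\bar L(e))^\circ=1$. Adding the two contributions gives $\mathrm{rank}(\bar L(e))=|\lambda^*(e)|+1$, which yields the desired index. The delicate point, and the main obstacle, is the identification $\mathrm{Im}(e)=L(e)\cdot v_\mu$ together with the finiteness of $\ker\bigl(L^*(e)\to\bar L(e)\bigr)$; both rest on the explicit Putcha-Renner description of the type map of $\J$-irreducible monoids, and without that structure the rank of $\bar L(e)$ could a priori shift.
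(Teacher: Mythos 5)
Your proposal is correct in substance but follows a genuinely different route from the paper. The paper's proof is two lines: $F_q$-splitness is invoked to write $|T(e)|=(q-1)^{\dim T-\dim eT}$, and Theorem 4.16 of Putcha--Renner is quoted for $\dim eT=|\lambda^*(e)|+1$. You instead identify $[T:T(e)]$ with $(q-1)^{\operatorname{rank}(\overline{L}(e))}$, where $\overline{L}(e)=L(e)/K(e)$, and compute that rank structurally: the semisimple rank is $|\lambda^*(e)|$ because $L_*(e)\subseteq K(e)=G_{\lambda_*(e)}$ dies in the quotient while the $\lambda^*(e)$-factor survives with finite kernel (no connected component of $\lambda^*(e)$ lies in the set of simple roots orthogonal to $\mu$), and the connected centre is one-dimensional by Schur's lemma applied to the irreducible $L(e)$-module $\mathrm{Im}(e)$ together with the scalar torus $K^*$. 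Since $\operatorname{rank}(\overline{L}(e))=\dim T-\dim T(e)=\dim eT$, what you are really doing is re-proving the Putcha--Renner dimension formula rather than citing it; that is legitimate and more illuminating (it explains the ``$+1$'' as the scalar torus and the $|\lambda^*(e)|$ as the faithful semisimple part), but it is also where your heavy inputs sit: the realization of $e\in\Lambda$ as the projection onto $L(e)\cdot v_\mu$, i.e.\ onto $\bigoplus_{\nu\in\mu-\mathbb{Z}_{\ge 0}\lambda^*(e)}V_\nu$, and the irreducibility of that subspace as an $L(e)$-module in characteristic $p$ (Smith's theorem) both need explicit citation, and together they amount to essentially the same content as the single theorem the paper quotes. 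Note also that your step $[T:T(e)]=(q-1)^{\operatorname{rank}(\overline{L}(e))}$, like the paper's $|T(e)|=(q-1)^{\dim T-\dim eT}$, silently assumes that $T(e)$ consists of the rational points of a split (in particular connected) subtorus; the paper attributes this to $F_q$-splitness of the monoid, and your argument should record the same assumption at that step rather than only for $\overline{L}(e)$.
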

\begin{proof} Note that $|T(e)| =(q-1)^{\text{dim }T - \text{dim }eT}$, since ${\bold M}$
is $F_q$-split. By Theorem 4.16 of \cite{PR2}, $\text{dim
}eT=|\lambda^*(e)| + 1$. This proves the lemma. \hspace{1cm}$\Box$
\end{proof}

\begin{theorem} \label{theorem OJS} Let $M = \{x\in {\bold M} \mid \sigma(x)=x\}$ be a finite reductive monoid as above. Then
$$
|M| =  \sum_{ e\in \Lambda } \frac { q^{N^*(e)}
(q-1)^{2(|\lambda(e)|-l)+1} \prod\limits_{i=1}^{l} (q^{d_i} - 1)^2
} { \prod \limits_{k=1}^{s} \prod\limits_{i=1}^{m_k} (q^{d_i} -
1)^2 \prod\limits_{k=s+1}^{t} \prod\limits_{j=1}^{m_k} (q^{d_j} -
1)  }.
$$
\end{theorem}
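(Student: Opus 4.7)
The plan is to derive Theorem \ref{theorem OJS} directly from Theorem \ref{theorem fTE} by plugging in the explicit form of $[T:T(e)]$ provided by Lemma \ref{lemma TTE} and by expanding every Poincar\'e polynomial $P_W(q)$ and $P_{W_{J_k}}(q)$ into products of $(q^{d_i}-1)$'s divided by powers of $(q-1)$. The only real content beyond Theorem \ref{theorem fTE} is then the careful bookkeeping of $(q-1)$-exponents.

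First, I would substitute $[T:T(e)] = (q-1)^{|\lambda^*(e)|+1}$ from Lemma \ref{lemma TTE} into the summand of Theorem \ref{theorem fTE}, so that each term in the sum over $\Lambda$ becomes
\[
\frac{(q-1)^{|\lambda^*(e)|+1}\,q^{N^*(e)}\,P_W^2(q)}{\prod_{k=1}^{s} P_{W_{J_k}}^2(q)\,\prod_{k=s+1}^{t} P_{W_{J_k}}(q)}.
\]
Next, using $P_W(q) = \prod_{i=1}^{l}(q^{d_i}-1)/(q-1)$ and $P_{W_{J_k}}(q) = \prod_{i=1}^{m_k}(q^{d_i}-1)/(q-1)$, each Poincar\'e polynomial in numerator and denominator splits into a numerator piece consisting of $(q^{d_i}-1)$-factors and a denominator piece consisting of $(q-1)$-factors. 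The numerator $P_W^2(q)$ gives $\prod_{i=1}^{l}(q^{d_i}-1)^2$ divided by $(q-1)^{2l}$; the denominator products give $\prod_{k=1}^{s}\prod_{i=1}^{m_k}(q^{d_i}-1)^2$ divided by $(q-1)^{2|\lambda_*(e)|}$ and $\prod_{k=s+1}^{t}\prod_{i=1}^{m_k}(q^{d_i}-1)$ divided by $(q-1)^{|\lambda^*(e)|}$, where I use the fact that the connected components $J_1,\dots,J_s$ of $\lambda_*(e)$ satisfy $\sum_{k=1}^{s} m_k = |\lambda_*(e)|$ and similarly $\sum_{k=s+1}^{t} m_k = |\lambda^*(e)|$.

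The only nontrivial step is then checking that all the $(q-1)$-exponents collapse to $2(|\lambda(e)|-l)+1$. Moving the denominator $(q-1)$-factors into the numerator, the total exponent is
\[
(|\lambda^*(e)|+1) \;-\; 2l \;+\; 2|\lambda_*(e)| \;+\; |\lambda^*(e)| \;=\; 2\bigl(|\lambda_*(e)| + |\lambda^*(e)|\bigr) + 1 - 2l \;=\; 2(|\lambda(e)|-l) + 1,
\]
using the disjoint union $\lambda(e) = \lambda_*(e) \sqcup \lambda^*(e)$. The remaining $(q^{d_i}-1)$ factors assemble into precisely the numerator $\prod_{i=1}^{l}(q^{d_i}-1)^2$ and denominator $\prod_{k=1}^{s}\prod_{i=1}^{m_k}(q^{d_i}-1)^2 \prod_{k=s+1}^{t}\prod_{j=1}^{m_k}(q^{d_j}-1)$ appearing in the statement of Theorem \ref{theorem OJS}.

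There is no real obstacle; the proof is a routine but delicate bookkeeping argument, and the only place where $\J$-irreducibility enters substantively is through Lemma \ref{lemma TTE}, which in turn rests on Theorem 4.16 of \cite{PR2} computing $\dim eT$. The degenerate case $e=0$ (if needed) can be checked separately and matches the $|\lambda(e)|=l$, $s=t=1$, $m_1=l$ specialization of the displayed formula.
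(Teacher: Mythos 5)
Your proposal is essentially the paper's own proof: substitute $[T:T(e)]=(q-1)^{|\lambda^*(e)|+1}$ from Lemma \ref{lemma TTE} into Theorem \ref{theorem fTE}, expand $P_W(q)$ and each $P_{W_{J_k}}(q)$ as $\prod(q^{d_i}-1)$ over powers of $(q-1)$, and collect the $(q-1)$-exponent to $2(|\lambda(e)|-l)+1$ using $\lambda(e)=\lambda_*(e)\sqcup\lambda^*(e)$, exactly as the paper does. The only caveat is your closing remark about $e=0$: there $\lambda_*(0)=\Delta$, $T(0)=T$, so the displayed formula's specialization gives $q-1$ while the true contribution of $G\cdot 0\cdot G$ is $1$ (Lemma \ref{lemma TTE} excludes $e=0$), a looseness the paper shares, since in its applications the formula is used only for $e\neq 0$ with the zero element counted separately via $|M|-1$.
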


\begin{proof} By Lemma \ref{lemma TTE} we obtain $[T:T(e)] = (q-1)^{|\lambda^*(e)|+1}$, for $e \in \Lambda\setminus\{0, 1\}$. Note also that, $P_W^2(q) =
(q-1)^{-2l}\prod\limits_{i=1}^{l} (q^{d_i} - 1)^2$ and
$$
\prod \limits_{k=1}^{s} P_{W_{J_k}}^2(q) \prod\limits_{k=s+1}^{t} P_{W_{J_k}}(q)
= \frac { \prod \limits_{k=1}^{s} \prod\limits_{i=1}^{m_k}
(q^{d_i} - 1)^2 \prod\limits_{k=s+1}^{t} \prod\limits_{j=1}^{m_k}
(q^{d_j} - 1) } { (q-1)^{|\lambda(e)| + |\lambda_*(e)|} }.
$$
It follows from Theorem \ref{theorem fTE} that this theorem holds. \hspace{1cm}$\Box$
\end{proof}

The above theorem tells us that the degrees $(d_1, d_2, ..., d_l)$ of the basic
polynomial invariants of W are critical in calculating the orders
of finite reductive monoids. They are described in standard books, for example, Proposition 10.2.5 of \cite {C1}.
For convenience, we list them.

\begin{proposition} \label{prop 5.2}
The degrees $(d_1, d_2, ..., d_l)$ of the basic polynomial invariants of W are given by

\hspace{1cm} $A_l: (2, 3, \cdots, l, l+1)$, for  $l\ge 1$.

\hspace{1cm} $B_l: (2, 4, \cdots, 2(l-1), 2l)$, for $l\ge 2$.

\hspace{1cm} $C_l: (2, 4, \cdots, 2(l-1), 2l)$, for $l\ge 3$.

\hspace{1cm} $D_l: (2, 4, \cdots, 2(l-1), l)$, for $l\ge 4$.

\hspace{1cm} $E_6: (2, 5, 6, 8, 9, 12)$.

\hspace{1cm} $E_7: (2, 6, 8, 10, 12, 14, 18)$.

\hspace{1cm} $E_8: (2, 8, 12, 14, 18, 20,  24, 30)$.

\hspace{1cm} $F_4: (2, 6, 8, 12)$.

\hspace{1cm} $G_2: (2, 6)$.
\end{proposition}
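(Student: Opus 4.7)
The plan is to invoke the Chevalley--Shephard--Todd theorem, which asserts that for a finite real reflection group $W$ of rank $l$ the ring of polynomial invariants is freely generated by $l$ algebraically independent homogeneous polynomials, and that the multiset of their degrees $d_1,\ldots,d_l$ is an intrinsic invariant of $W$. Two classical numerical constraints,
\[
\prod_{i=1}^{l} d_i \;=\; |W|, \qquad \sum_{i=1}^{l} (d_i - 1) \;=\; |\Phi^+|,
\]
combined with the known orders of $W$ and the cardinalities of $\Phi^+$ for each irreducible root system, already restrict the degrees very strongly.

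Next I would handle the classical families by exhibiting invariants explicitly. For $A_l$, the Weyl group $S_{l+1}$ acts by coordinate permutations on the hyperplane $\sum x_i = 0$, so the power sums $\sum x_i^k$ for $2 \le k \le l+1$ form a free system of invariants of degrees $(2,3,\ldots,l+1)$. For $B_l$ and $C_l$ the Weyl group is the hyperoctahedral group of signed permutations, and the elementary symmetric polynomials in $x_1^2, \ldots, x_l^2$ are free invariants of degrees $(2,4,\ldots,2l)$. For $D_l$ only an even number of sign changes is permitted; the same symmetric polynomials in the $x_i^2$ still work except that the top degree-$2l$ generator is replaced by the Pfaffian-type monomial $x_1 x_2 \cdots x_l$ of degree $l$, yielding $(2,4,\ldots,2(l-1),l)$. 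In each classical case the numerical identities above together with algebraic independence verify that one has the complete list.

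The main obstacle is the exceptional types $E_6, E_7, E_8, F_4, G_2$, for which no such uniform construction is available. The cleanest route I would take uses the Coxeter element $c\in W$: its eigenvalues on $V$ are precisely the primitive roots of unity $e^{2\pi i m_j / h}$, where $h$ is the Coxeter number and the exponents $m_j = d_j - 1$ then recover the $d_j$. In practice one realises the root system concretely, computes the characteristic polynomial of $c$, and reads off the exponents; each exceptional case is a finite but nontrivial calculation, heaviest for $E_8$. Since these data are entirely classical and tabulated in Carter's book, the proposition is ultimately a citation, and the sketch above only serves to justify the listed tuples.
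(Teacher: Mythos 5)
Your proposal is correct, and in the end it lands in the same place as the paper: the paper offers no argument at all, simply recording the degrees with a pointer to Proposition 10.2.5 of Carter's book, whereas you supply an actual verification sketch before conceding that the data are classical and tabulated. Your sketch is sound: the Chevalley--Shephard--Todd theorem guarantees free generation and the intrinsic nature of the degrees; the identities $\prod_i d_i = |W|$ and $\sum_i (d_i-1) = |\Phi^+|$ are the standard consistency checks; the explicit generators you name for $A_l$ (power sums on the hyperplane $\sum x_i=0$), $B_l/C_l$ (elementary symmetric functions in the $x_i^2$), and $D_l$ (replacing the top generator by $x_1\cdots x_l$) are the textbook constructions; and reading the exponents off the eigenvalues of a Coxeter element is the usual way to settle the exceptional types, though note that the identification $m_j = d_j - 1$ of exponents with degrees is itself a nontrivial theorem (Coxeter, Coleman, Steinberg) that you are implicitly importing along with Carter's tables. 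So the difference is one of economy versus self-containment: the paper treats the proposition purely as a convenience list for use in Theorems 4.1 and 4.2, which is entirely adequate for its purposes, while your route would let a reader check the tuples independently at the cost of case-by-case computations (heaviest for $E_8$) that the paper deliberately avoids.
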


For $r=1, ..., n$ let
\[
[n, r, q] = \frac{(q^{n} - 1) \cdots (q - 1)}{[(q^{r} - 1) \cdots (q - 1)] [(q^{n-r} - 1) \cdots (q - 1)]}.
\]
By convention, if $r=0$, then let $[n, r, q] = 1$. For the monoid $M_n(F_q)$ of type $A_l$ with $n = l + 1$,
apply Theorem \ref{theorem OJS} to $M_n(F_q)$. We have
\[
    |M_n(F_q)| - 1 = \sum_{r=1}^{n}
                            q^{r(r-1)/2}
                            [n, r, q]^2
                            \prod_{i=1}^{r}(q^{i} - 1).
\]
Furthermore, let $M^r$ be the set of matrices in $M_n(F_q)$ of rank $r$, for $r=0, ..., n$. Then
\[
    |M^r| = q^{r(r-1)/2}[n, r, q]^2 \prod_{i=1}^{r}(q^{i} - 1).
\]
These formulas are recorded in Solomon \cite{LS0} and Renner \cite{R4}. In the next section, we calculate the orders of finite symplectic monoids.

\subsection{Orders of Finite Symplectic Monoids}
Let $n= 2l$ be an even positive integer, and $Sp_n$ be the symplectic group over the algebraic closure of $F_q$ (see Humphreys \cite{H2}). The Dynkin diagram of $Sp_n$ is as follows.

\setlength{\unitlength}{1mm}
\vskip 10mm
\begin{center}
\begin{picture}(80,15)

\put (15,15.5) {\circle{2.5}}

\put (23.6,15.8) {\circle{2.5}}

\put (31, 15.4) {.\,.\,.\,.\,.\,.\,.}

\put (49.5,15.8) {\circle{2.5}}

\put (52.8,14.92) {$<$}

\put (58,15.8) {\circle{2.5}}

\put (14.1,19) {$1$}

\put (23.3,19) {$2$}

\put (45.8,19) {$l-1$}

\put (57.3,19) {$l$}

\put (16.5,15.6) {\line (1,0) {6}}

\put (25, 16) {\line (1, 0) {4}}

\put (44,16) {\line (1,0) {4}}

\put (50.8, 15.6) {\line (1,0){6}}

\put (50.8, 16.4) {\line (1,0){6}}

\end{picture}
\end{center}

\vskip -15mm
\noindent Let $\rho$ be the irreducible representation associated with the last fundamental dominant weight of
type $C_l$. Then ${\bf MSp_n} = \overline {K^*\rho (Sp_n)}$ is a semisimple monoid, and is called a symplectic
monoid. More details about symplectic monoids can be found in \cite{LZH, LR}. Let $MSp_n$ be the monoid consisting of fixed points of ${\bf MSp_n}$ under the Frobinus map of ${\bf MSp_n}$ to itself. The $MSp_n$ is referred to as a finite symplectic monoid.

\begin{theorem}\label{theorem OSP} Let $MSp_n$ be a finite symplectic monoid. Then
$$
|MSp_n| - 1 = (q-1) \sum_{r=0}^{l} ~q^{r^2}
                                       [l, r, q^2]^2
                                        \prod\limits_{i=1}^{r} (q^{2i} - 1)
                                        \prod\limits_{i=1}^{l-r} (q^{i} + 1)^2.
$$
\end{theorem}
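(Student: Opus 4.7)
The plan is to apply Theorem 3.4 (recast as Theorem 4.1) directly to the $F_q$-split $\J$-irreducible symplectic monoid $MSp_n$. The $-1$ on the left-hand side accounts for the zero orbit $\{0\}$, which contributes a single element to $|MSp_n|$; the sum over $r = 0, \ldots, l$ runs over the $l+1$ non-zero idempotents in the cross section lattice $\Lambda$.

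First I would invoke the structure theory of $\J$-irreducible monoids, together with the explicit description of $MSp_n$ in \cite{LR, LZH}, to record the cross section lattice and its type map. We have $\Lambda \setminus \{0\} = \{e_0, e_1, \ldots, e_l\}$, where $e_l = 1$ is the identity and $e_0$ is the minimal non-zero idempotent. Since the highest weight $\omega_l$ has support $\{\alpha_l\}$, one finds for $0 \le r < l$ that
\[
\lambda^*(e_r) = \{\alpha_{l-r+1}, \ldots, \alpha_l\} \text{ (type $C_r$)}, \quad \lambda_*(e_r) = \{\alpha_1, \ldots, \alpha_{l-r-1}\} \text{ (type $A_{l-r-1}$)},
\]
while for $r = l$, $\lambda^*(e_l) = \Delta$ of type $C_l$ and $\lambda_*(e_l) = \emptyset$. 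Counting positive roots of $C_r$ then gives $N^*(e_r) = r^2$.

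Next I would substitute these data into the formula of Theorem 4.1, reading off the basic-invariant degrees from Proposition 4.1: $(2, 4, \ldots, 2l)$ for $W$ of type $C_l$; $(2, 4, \ldots, 2r)$ for the $C_r$ component of $\lambda^*(e_r)$; and $(2, 3, \ldots, l-r)$ for the $A_{l-r-1}$ component of $\lambda_*(e_r)$. Using the elementary factorization $q^{2i}-1 = (q^i-1)(q^i+1)$ and regrouping the resulting $q$-products by way of the definition of $[l, r, q^2]$, each term collapses to
\[
(q-1) q^{r^2} [l, r, q^2]^2 \prod_{i=1}^{r}(q^{2i}-1) \prod_{i=1}^{l-r}(q^i+1)^2.
\]
Summing over $r$ and adding $1$ for the zero orbit yields the stated formula.

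The main obstacle is the explicit identification of the type map in the second step; once the shapes of $\lambda_*(e_r)$ and $\lambda^*(e_r)$ and the count $N^*(e_r) = r^2$ are in hand, what remains is careful but routine bookkeeping with $q$-products, with the factorization $q^{2i}-1 = (q^i-1)(q^i+1)$ doing most of the arithmetic work.
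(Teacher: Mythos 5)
Your proposal is correct and follows essentially the same route as the paper: both apply the order formula for $F_q$-split $\J$-irreducible monoids (Theorem 4.1) to the Putcha--Renner type-map data $\lambda^*(e_r)=\{\alpha_{l-r+1},\dots,\alpha_l\}$, $\lambda_*(e_r)=\{\alpha_1,\dots,\alpha_{l-r-1}\}$ with $N^*(e_r)=r^2$, isolate the zero orbit as the $-1$, and simplify the resulting $q$-products via $q^{2i}-1=(q^i-1)(q^i+1)$ into the $[l,r,q^2]$ form. The only cosmetic difference is that you label $\lambda^*(e_r)$ as type $C_r$ where the paper says $B_r$; the Weyl group, its invariant degrees, and the positive-root count coincide, so the computation is unaffected.
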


\begin{proof}

\vspace{2mm}

It follows from Section 6 of \cite{PR2} that
\[
    \lambda^*(\Lambda \setminus \{0\}) = \{\phi, \{\a_l\}, \{\a_{l-1}, \a_l\},\dots, \{\a_1, \dots,\a_l\} \}.
\]
If $\lambda^*(e) = \emptyset$, then $\lambda(e) = \lambda_*(e) = \{\a_1, \dots, \a_{l-1}\}$ is of type $A_{l-1}$, and
has only one connected component with $|\lambda(e)| = l-1$. Thus, $ y(e) = (q-1)^{-1}\prod\limits_{i=1}^{l} (q^{2i} - 1)^2$ and $ z(e) = \prod\limits_{i=1}^{l-1} (q^{i+1} - 1)^2 $. Hence
\[
\frac {y(e)}{z(e)} = \frac { (q-1)\prod\limits_{i=1}^{l} (q^{2i} - 1)^2 } { \prod\limits_{i=1}^{l} (q^{i} - 1)^2 } = (q-1) \prod\limits_{i=1}^{l} (q^{i} + 1)^2.
\]

If $\lambda^*(e) = \{\a_{l-r+1}, \dots, \a_l \}$ with $1\le r\le l$,
then
\[
\lambda_*(e) =  \left\{ \begin{array}{ll}
                \{ \a_{1}, \dots, \a_{l-r-1} \} & \mbox{ if $r = 1, ..., l-2$}; \\
                \emptyset                        & \mbox{ if $r = l-1$ or $r=l$}.
                \end{array} \right.
\]
Thus, $\lambda^*(e)$ is of type $B_r$ and $\lambda_*(e)$ is of type $A_{l-r-1}$, where we agree that $A_{-1} = A_0 \cong 1$. It is clear that $|\lambda(e)|=l-1$. Thus,
$
    y(e) = ~q^{r^2} (q-1)^{-1} \prod\limits_{i=1}^{l} (q^{2i} - 1)^2
$
and
$
    z(e) = \prod \limits_{i=1}^{l-r-1} (q^{i+1}-1)^2 \prod\limits_{i=1}^{r} (q^{2i} - 1),
$
where $\prod\limits_{i=1}^{m} (q^{i+1}-1)^2 = 1$ if $m \le 0$. So
\[
\frac {y(e)}{z(e)} = \frac  { ~(q-1)q^{r^2}\prod\limits_{i=1}^{l} (q^{2i} - 1)^2 }
                            {\prod\limits_{i=1}^{l-r} (q^{i} - 1)^2\prod\limits_{i=1}^{r} (q^{2i} - 1) }.
\]
The formula follows from Theorem \ref{theorem OJS}      $\hfill\Box$

\end{proof}

All maximal chains in the cross section lattice $\Lambda$ of ${MSp_n}$ with $n = 2l$ have the same length. This yields a rank function on ${MSp_n}$ via the decomposition of
$$
    {MSp_n} = \bigsqcup_{x \in \Lambda} G x G
$$
where $G$ is the unit group of ${MSp_n}$. Note that this definition of rank is in general different from that of the rank of matrices. Denote by $M^r$ the set of elements in $MSp_n$ of rank $r = 0, 1, ..., l+1$. Clearly, $|M^0| = 1$. The proof of Theorem \ref{theorem OSP} indicates that
\[
    |M^r| = (q-1)q^{(r-1)^2} [l, r-1, q^2]^2
                                        ~\prod\limits_{i=1}^{r-1} (q^{2i} - 1)
                                        \prod\limits_{i=1}^{l-r+1} (q^{i} + 1)^2.
\]

\subsection{H-polynomials of Symplectic Monoids}

Let ${\bold B}$ be a Borel subgroup of the unit group of a symplectic monoid ${\bf MSp_n}$. Let ${\bold T \subseteq {\bold B}}$ denote a maximal torus, and $R$ denote the Renner monoid of ${\bold M}$. It follows from Definition 2.3 of Renner \cite{R8} that the $H$-polynomial of ${\bf MSp_n}$ is given by
\[
    H(t) = \sum_{R\setminus \{0\}} (t-1)^{r(x)-1}t^{l(x)-r(x)},
\]
where $r(x) = \dim({\bold T}x)$ and $l(x) = \dim({\bold B}x{\bold B})$. Renner \cite{R8} gave a systematic description of H-polynomials of semisimple monoids.

\begin{proposition}\label{H(q)} The H-polynomial of a symplectic monoid ${\bf MSp_n}$ is given by
\[
   H(q) =  \sum_{r=0}^{l} ~q^{r^2} [l, r, q^2]^2
                                        \prod\limits_{i=1}^{r} (q^{2i} - 1)
                                        \prod\limits_{i=1}^{l-r} (q^{i} + 1)^2.
\]
\end{proposition}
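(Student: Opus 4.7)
The plan is to deduce Proposition 4.2 directly from Theorem 4.2 by establishing the general bridging identity $|MSp_n| - 1 = (q-1)\,H(q)$; dividing the explicit right-hand side of Theorem 4.2 by $(q-1)$ then yields the claimed formula for $H(q)$ upon cancelling the leading factor.

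To prove the bridging identity, I would first invoke the Bruhat-Renner decomposition ${\bf MSp_n} = \bigsqcup_{x \in R} {\bf B}\,x\,{\bf B}$. Since ${\bf MSp_n}$ is $F_q$-split, $F_q$-rational points can be counted cell by cell. As noted in the proof of Theorem 2.1, for $x \ne 0$ the double coset ${\bf B}\,x\,{\bf B}$ is isomorphic as a variety to $K^{a}\times(K^{*})^{b}$ with $a \ge 0$ and $b > 0$. Matching dimensions against the definitions $l(x)=\dim({\bf B}\,x\,{\bf B})$ and $r(x)=\dim({\bf T}\,x)$ forces $b=r(x)$ and $a=l(x)-r(x)$, so over $F_q$ one has $|BxB| = q^{\,l(x)-r(x)}(q-1)^{r(x)}$.

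Summing over $x \in R$, separating the single-point cell $|B\cdot 0 \cdot B| = 1$ coming from $x=0$, and factoring out $(q-1)$, I obtain
\[
|MSp_n| - 1 \;=\; \sum_{x \in R \setminus \{0\}} q^{\,l(x)-r(x)}(q-1)^{r(x)} \;=\; (q-1)\sum_{x \in R \setminus \{0\}}(q-1)^{r(x)-1}\,q^{\,l(x)-r(x)} \;=\; (q-1)\,H(q),
\]
where the last equality is Renner's definition of $H(t)$ specialized at $t=q$. Substituting the explicit expression from Theorem 4.2 on the left and cancelling $(q-1)$ then produces the formula for $H(q)$ asserted in Proposition 4.2.

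The one nontrivial step is the exact cell count $|BxB| = q^{\,l(x)-r(x)}(q-1)^{r(x)}$: the proof of Theorem 2.1 only extracted the existence of exponents $a,b$, so pinning them down as $l(x)-r(x)$ and $r(x)$ relies on the standard description of the toric factor of a monoid Bruhat cell as the ${\bf T}$-orbit of $x$, a fact I would cite from Renner's work on Renner monoids. Once this identification is in place, the rest of the argument is bookkeeping with Theorem 4.2 and the definition of $H$.
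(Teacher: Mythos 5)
Your proposal is correct and follows the same route as the paper: both rest on the identity $|MSp_n| - 1 = (q-1)H(q)$ combined with the order formula of Theorem 4.2, the only difference being that the paper simply cites Proposition 4.1 and Theorem 4.1 of Renner's \emph{Finite reductive monoids} for that identity, whereas you sketch its proof by counting $F_q$-points of the Bruhat--Renner cells as $q^{l(x)-r(x)}(q-1)^{r(x)}$ — which is precisely the content of the cited results. Your acknowledged reliance on Renner for identifying the toric factor of ${\bf B}x{\bf B}$ with ${\bf T}x$ is exactly where the paper's citation does the work, so there is no gap.
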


\begin{proof}
By both Proposition 4.1 and Theorem 4.1 of Renner \cite {R4} we have
\[
    |MSp_n| - 1 = (q-1) H(q).
\]
The result of this proposition follows from Theorem \ref{theorem OSP} $\hfill\Box$
\end{proof}

{\bf Remark} Can and Renner \cite{C2} obtained the $H$-polynomial of the symplectic rook monoid in a different way.

If $l=2$, by Proposition \ref{H(q)} we have
\[
    H(q) = 1 + q + q^2 + 2q^3 + 2q^4 + 2q^5+ 2q^6+ 2q^7 + q^8+ q^9+ q^{10}.
\]
Setting $k=0$ and $N=4$ in the notation of Example 5.2 of \cite{R6} and Example 6.2 of \cite {R8}, we see that the above $H(q)$ and the $H$-polynomial of type $C_2$ in Renner \cite{R6, R8} are the same.

If $l=3$ in Proposition \ref {H(q)}, then an elementary calculation yields that
\[
\begin{aligned}
    H(q) =   & 1 + q + q^2 + 2 q^3 + 2 q^4 + 3 q^5 + 4 q^6 + 4 q^7 + 4 q^8 + 5 q^9 + 5 q^{10} + 5 q^{11} \\
                        &+ 5 q^{12} + 4 q^{13} + 4 q^{14} + 4 q^{15} + 3 q^{16} + 2 q^{17} + 2 q^{18} + q^{19} + q^{20} +  q^{21}.
\end{aligned}
\]

Notice that the coefficients of $H(q)$ are Betti numbers of some topological spaces. This is not an accident. A general result is given below.

\begin{corollary} Let ${\bf M}$ be a symplectic monoid over the field $\mathbb C$ of complex numbers. Denote by $P({\bf M}) = ({\bf M} \setminus \{0\}) / {\mathbb C}^*$ the symplectic projective space, where ${\mathbb C}^* = {\mathbb C} \setminus \{0\}$. Then the sequence of coefficients of the following polynomial
\[
   H(q) =  \sum_{r=0}^{l} ~q^{r^2}[n, r, q^2]^2
                                        \prod\limits_{i=1}^{r} (q^{2i} - 1)
                                        \prod\limits_{i=1}^{l-r} (q^{i} + 1)^2
\]
consists of Betti numbers of $P({\bf M})$, and hence palindromic.
\end{corollary}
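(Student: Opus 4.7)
The plan is to bridge Proposition 4.2 to the topology of $P(\mathbf{M})$ through Renner's general framework from \cite{R6, R7, R8}: exhibit $H(q)$ as the compactly supported Poincar\'e polynomial of $P(\mathbf{M})$ via the Bruhat-Renner decomposition, then invoke Poincar\'e duality on the rationally smooth projective variety $P(\mathbf{M})$ to obtain palindromicity.

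First I would observe that by Proposition 4.2 the displayed $H(q)$ coincides, for the symplectic monoid $\mathbf{M} = \mathbf{MSp}_n$ over $\mathbb{C}$, with the $H$-polynomial of $\mathbf{M}$ in the sense of Definition 2.3 of \cite{R8}. Starting from the Bruhat-Renner decomposition $\mathbf{M} \setminus \{0\} = \bigsqcup_{x \in R\setminus\{0\}} \mathbf{B}x\mathbf{B}$, and using the same variety-theoretic identification $\mathbf{B}x\mathbf{B} \cong \mathbb{C}^{l(x)-r(x)} \times (\mathbb{C}^*)^{r(x)}$ employed in the proof of Theorem 2.1, I would pass to the quotient by the free central $\mathbb{C}^*$-action. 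Each piece becomes $\mathbb{C}^{l(x)-r(x)} \times (\mathbb{C}^*)^{r(x)-1}$, whose compactly supported Poincar\'e polynomial is $t^{l(x)-r(x)}(t-1)^{r(x)-1}$. Additivity of compactly supported Euler-Poincar\'e series over a locally closed stratification then sums these contributions to give $H(t)$ as the full compactly supported Poincar\'e polynomial of $P(\mathbf{M})$.

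Because $P(\mathbf{M})$ is projective, hence compact, compactly supported and ordinary rational cohomology agree, so the coefficient of $t^i$ in $H(q)$ equals $\dim_{\mathbb{Q}} H^i(P(\mathbf{M}), \mathbb{Q}) = b_i(P(\mathbf{M}))$. For palindromicity I would cite the rational-smoothness results of Renner for semisimple monoid embeddings in \cite{R6, R8}: once $P(\mathbf{M})$ is known to be rationally smooth, Poincar\'e duality yields $b_i = b_{2d-i}$ with $d = \dim P(\mathbf{M})$, which is the desired symmetry of the coefficient sequence.

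The main obstacle I anticipate is rigorously justifying rational smoothness of $P(\mathbf{M})$ at the boundary strata indexed by $R\setminus\{1,0\}$; a self-contained argument would need a direct check for the symplectic family, whereas appealing to Renner's general classification of rationally smooth semisimple embeddings makes the reduction immediate. A useful consistency check for the reader is that the explicit coefficient sequences exhibited for $l=2$ and $l=3$ just before the corollary are visibly palindromic, matching the prediction of Poincar\'e duality.
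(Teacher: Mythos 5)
There is a genuine gap at the central step of your argument. You claim that the compactly supported Poincar\'e polynomial of $\mathbb{C}^{a}\times(\mathbb{C}^*)^{b}$ is $t^{a}(t-1)^{b}$ and that these contributions can be summed over the strata of $P(\mathbf{M})$ by ``additivity of compactly supported Euler--Poincar\'e series.'' Both halves of this are false as stated: $t^{a}(t-1)^{b}$ is the point count over $\mathbb{F}_q$ (equivalently the Hodge--Deligne $E$-polynomial), not the compactly supported Poincar\'e polynomial --- for instance $\mathbb{C}^*$ has $H_c^1=H_c^2=\mathbb{Q}$, so its compactly supported Poincar\'e polynomial is $t^2+t$, not $t-1$ --- and compactly supported Betti numbers are not additive over a locally closed stratification (only Euler characteristics, point counts and $E$-polynomials are). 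A nodal cubic, stratified as $\mathbb{C}^*\sqcup\{\mathrm{pt}\}$, has polynomial point count $q$ but Betti numbers $1,1,1$, so ``polynomial point count $\Rightarrow$ coefficients are Betti numbers'' genuinely needs a purity input. In other words, rational smoothness of $P(\mathbf{M})$ is not just what you need for palindromicity at the end; it is already indispensable for the identification of the coefficients of $H(q)$ with Betti numbers (and with it one gets $b_{2i}=$ coefficient of $q^i$ and vanishing odd Betti numbers --- note your indexing ``coefficient of $t^i$ equals $b_i$'' also misses this degree doubling, visible in the $l=2$ example where $\deg H=10=\dim_{\mathbb{C}}P(\mathbf{M})$ while the top cohomological degree is $20$).

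For comparison, the paper does not attempt the stratification-plus-duality argument at all: it observes that the $H$-polynomial depends only on the Renner monoid, that the complex symplectic monoid and the finite symplectic monoid share the same Renner monoid, and then quotes the result of Section 3 of Renner's paper on rationally smooth algebraic monoids together with Proposition 4.2; that cited result is exactly the purity/rational-smoothness statement that converts the $H$-polynomial (a point-count datum) into Betti numbers and gives the palindromic symmetry. Your outline could be repaired by making that citation carry the Betti-number identification itself --- e.g., by arguing via the finite-field point count $|P(M)(\mathbb{F}_q)|=H(q)$ from Theorem 4.2 and purity of the cohomology of the rationally smooth projective variety $P(\mathbf{M})$ --- rather than via an additivity of Poincar\'e polynomials that does not hold.
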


\begin{proof} Note that the polynomial $H(q)$ of ${\bf M}$ depends only on the Renner monoid of ${\bf M}$. On the other hand, ${\bf M}$ and the finite symplectic monoid $MSp_n$ have the same Renner monoid. Applying the result of Section 3 of Renner \cite{R6} and the above Proposition \ref{H(q)}, we see that this corollary is true.
\end{proof}

{\bf Open Question:}
How to determine whether a $\J$-irreducible monoids ${\bold M} = \overline {K^*\rho(G_0)}$ is $F_q$-split, where $G_0$ is a simple algebraic group over the algebraic closure of $F_q$ and $\rho: G_0\to GL(V)$ is an irreducible representation associated with any dominant weight?

\vspace{0.2cm} {\bf Acknowledgment: } We would like to thank Lex Renner for many useful suggestions, especially, for pointing out Theorem 2.1 and the sketch of its proof. We also thank Mohan Putcha and Reginald Koo for many helpful discussions.


\newpage

\vspace{10mm}
\noindent Zhuo Li\\
Department of Mathematics \\
Xiangtan University\\
Xiangtan, Hunan 411105, P. R. China\\
Email: zli@mail.xtu.edu.cn

\vspace{3mm} \noindent Zhenheng Li \\
Department of Mathematical Sciences \\
University of South Carolina Aiken\\
Aiken, SC 29801, USA\\
\noindent Email: zhenhengl@usca.edu

\vspace{3mm}
\noindent You'an Cao \\
Department of Mathematics \\
Xiangtan University\\
Xiangtan, Hunan 411105, P. R. China\\
Email: cya@mail.xtu.edu.cn

\end{document}